\DeclareSymbolFont{cyrletters}{OT2}{wncyr}{m}{n}
\DeclareMathSymbol{\sha}{\mathalpha}{cyrletters}{"58}
\newtheorem{theorem}{Theorem}[section]
\newtheorem{lemma}[theorem]{Lemma}
\newtheorem{prop}[theorem]{Proposition}
\newtheorem{cor}[theorem]{Corollary}
\newtheorem{defn}[theorem]{Definition}
\newtheorem{remark}[theorem]{Remark}
\newtheorem{conj}[theorem]{Conjecture}
\newcommand{\nc}{\newcommand}
\nc{\rnc}{\renewcommand}
\rnc{\P}{\mathbb P}
\nc{\R}{\mathbb R}
\nc{\C}{\mathbb C}
\nc{\A}{\mathcal A}
\nc{\E}{\mathcal E}
\nc{\Q}{\mathbb Q}
\nc{\Z}{\mathbb Z}
\rnc{\O}{\mathcal O}
\nc{\Sym}{\text{Sym}}
\nc{\Spec}{\text{Spec}\,}
\nc{\eps}{\epsilon}
\nc{\Pic}{\text{Pic}}
\nc{\ov}{\overline}
\nc{\X}{\mathcal X}
\nc{\F}{\mathbb F}
\nc{\G}{\mathbb G}
\nc{\D}{\mathfrak D}
\nc{\PP}{\mathfrak P}
\nc{\T}{\mathfrak T}
\rnc{\H}{\mathcal H}
\nc{\LL}{\mathcal L}
\nc{\Ra}{\Rightarrow}
\nc{\M}{\mathcal M}
\nc{\W}{\mathcal W}
\nc{\B}{\mathcal B}
\rnc{\F}{\mathcal F}
\nc{\la}{\langle}
\nc{\ra}{\rangle}
\nc{\Bl}{\text{Bl}}
\nc{\codim}{\text{codim}}
\nc{\im}{\text{im}}
\nc{\us}{\underset}
\nc{\os}{\overset}
\nc{\ul}{\underline}
\rnc{\L}{\mathbb L}
\rnc{\S}{\mathcal S}
\nc{\NS}{\text{NS}}
\nc{\MW}{\text{MW}}
\nc{\NL}{\text{NL}}
\nc{\End}{\text{End}}
\nc{\Hom}{\text{Hom}}
\nc{\Ext}{\text{Ext}}
\nc{\Def}{\text{Def}}
\nc{\Jac}{\text{Jac}}
\nc{\bs}{\backslash}
\nc{\sslash}{\mathbin{/\mkern-6mu/}}
\title{Counting Curves on a Weierstrass model}
\author{Fran\c{c}ois Greer}
\begin{document}

\begin{abstract}Let $X\to \mathbb P^2$ be the elliptic Calabi-Yau threefold given by a general Weierstrass equation.  We answer the enumerative question of how many discrete rational curves lie over lines in the base, proving part of a conjecture by Huang, Katz, and Klemm.  The key inputs are a modularity theorem of Kudla and Millson for locally symmetric spaces of orthogonal type and the deformation theory of $A_n$ singularities.\end{abstract}

\maketitle

\section{Introduction}
\noindent
We present a strategy for proving modularity of curve counts when $X$ is an elliptic Calabi-Yau threefold.  This work was inspired by the physics conjectures of Huang-Katz-Klemm in \cite{hkk}, and by the modular form techniques of Maulik-Pandharipande in \cite{mp}.  
\begin{defn} A threefold $X$ is {\it elliptic} if it admits a morphism $\pi:X\to B$ whose generic fiber is a smooth genus 1 curve, and a section $z:B\to X$.\end{defn}
\noindent
The following conjecture is now well established in the literature.
\begin{conj}  If $X$ is an elliptically fibered Calabi-Yau variety, then its Gromov-Witten invariants are governed by (quasi-)modular forms.\end{conj}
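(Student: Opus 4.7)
Since the general conjecture is well beyond the scope of any single technique, I focus on the tractable case promised in the abstract: discrete rational curves in a Weierstrass Calabi--Yau threefold $X \to \P^2$ lying over lines in the base. The overall strategy is that modularity of the curve-counting generating series should follow, stratum by stratum over the base, from orthogonal modularity on a moduli space of elliptic surfaces.

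First I would slice the problem by the base. Fix a line $L \subset \P^2$ and let $S_L = \pi^{-1}(L)$ be the pullback; for generic $L$ this is a rational elliptic surface with Mordell--Weil lattice of $E_8$ type. A rational curve in $X$ that maps isomorphically onto $L$ is the image of a section of $S_L \to L$, and its symplectic area equals the height of the corresponding lattice element. Thus for each $L$ the enumerative count reduces to a count of lattice points weighted by a height pairing.

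Second, I would vary $L$ over the dual plane $(\P^2)^\vee$. The resulting family $\{S_L\}$ admits a period map into a locally symmetric variety $\Gamma \bs \mathcal D$ of orthogonal type $\mathrm{O}(2,n)$ attached to the transcendental lattice of the surfaces. The assembled generating series of curve counts becomes a theta series on $\Gamma \bs \mathcal D$ whose coefficients encode the Mordell--Weil lattices, and the Kudla--Millson construction promotes such a series to the $q$-expansion of a modular form valued in the cohomology of $\Gamma \bs \mathcal D$. Pairing against the appropriate cycle class then delivers a (quasi-)modular generating function, which is the modularity side of the argument.

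Finally, I would match the lattice-point data with the honest Gromov--Witten invariants of $X$. This is where deformation theory of $A_n$ singularities enters: when $L$ is tangent to the discriminant locus or passes through a cusp, $S_L$ acquires rational double points whose resolution modifies $\MW(S_L)$, while the underlying curve classes in $X$ persist. I expect this reconciliation to be the main obstacle, since it requires a global comparison between the count on a smoothed resolution and the Gromov--Witten count on the singular Weierstrass total space, mediated by the miniversal deformation of each $A_n$ singularity; making this match exact, in a way compatible with the theta series and stable across the loci in $(\P^2)^\vee$ where the $A_n$ type jumps, is the technical heart of the argument.
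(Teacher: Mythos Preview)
Your overall architecture matches the paper's: slice by lines, assemble a period map for the family of surfaces, invoke Kudla--Millson modularity for special cycles on an orthogonal locally symmetric space, and then reconcile the Noether--Lefschetz intersection numbers with actual curve counts via the deformation theory of $A_n$ singularities. So at the level of strategy you are on target, and you are right that the general conjecture is not what the paper proves; only the contribution $h(n)$ of the smooth-section component is established, with the remaining piece $c_{n,0}$ left as a further conjecture.

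That said, two concrete errors in your setup would derail the execution. First, $S_L=\pi^{-1}(L)$ is \emph{not} a rational elliptic surface with $E_8$ Mordell--Weil lattice. The Weierstrass data restrict to degrees $12$ and $18$ on $L\simeq\P^1$, so $S_L$ is of type $E(3)$: it has $p_g=2$, $h^{1,1}=30$, and for generic $L$ the Mordell--Weil group is \emph{trivial}. Extra sections appear only along Noether--Lefschetz loci, and those loci---because $p_g=2$---have complex codimension~$2$, not~$1$. This is exactly what makes the count of lines carrying a nontrivial section finite.

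Second, and relatedly, the period domain is not of type $\mathrm{O}(2,n)$. Since $h^{2,0}=2$, the polarized lattice has signature $(4,28)$ and the relevant symmetric space is $\mathrm{O}(4,28)/\mathrm{U}(2)\times\mathrm{O}(28)$, a \emph{non-Hermitian} (non-classical) period domain. Kudla--Millson still applies, but the special cycles $M_n$ have real codimension $p=4$, and one must pass between the period domain and the symmetric space via the $S^2$-fibration $\mathrm{O}(4)/\mathrm{U}(2)$, and between the scheme $\P^{2*}$ and a stacky thickening $\PP$ to handle the monodromy around $\Delta^*$. None of this is visible if you model the situation on $\mathrm{O}(2,n)$; in particular the ``theta series encoding Mordell--Weil lattices'' picture you sketch is not quite what happens---the theta series that appear are the $A_1$ and $A_2$ root-lattice theta functions, arising as correction terms from the singular surfaces, not from a generic $E_8$ lattice.
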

\begin{remark} In dimension 1, the Calabi-Yau varieties are elliptic curves, and in dimension 2, the Calabi-Yau varieties are K3 surfaces, which can always be deformed to elliptic fibrations over $\P^1$.  By deformation invariance, the elliptic condition only affects dimensions 3 and higher.\end{remark}
\noindent
In the 1990's, Dijkgraaf used mirror symmetry heuristics to conjecture the quasi-modularity of square tiled surface counts \cite{Dijk}.  This was subsequently proven by Okounkov and his collaborators, using representation theory of the Virasoro algebra.  Katz-Klemm-Vafa \cite{kkvblack} used M-theory to conjecture a modular product formula for Gromov-Witten invariants of the K3 surface (KKV formula).  Yau and Zaslow \cite{yz} proved a beautiful formula counting {\it rational} curves on K3 surfaces using the relative compactified Jacobian.  The formula for primitive classes was proven in all genera by Bryan-Leung \cite{bryanleung}, and then completed by Pandharipande-Thomas \cite{kkvpt} for all classes.  The present work addresses the modularity in dimension 3.  A general conjecture for the Weierstrass model over $\P^2$ is presented in \cite{hkk}, using the topological string partition function.  Significant progress in this direction has been made by Oberdieck-Shen in \cite{oberdieck}, using the sheaf counting theories.  Here, we approach the problem from the Gromov-Witten side, and focus on the question of counting rational curves which lie over lines in the base.\\\\
In this paper, we take $X$ to be a general Weierstrass fibration $\pi:X \to \P^2$.  The Weierstrass equation
$$zy^2 = x^3+ a_{12}xz^2+b_{18}z^3$$
cuts $X$ out of the projective bundle $\P(\O(6)\oplus \O(9)\oplus \O)$ over $\P^2$, where 
$$a_{12}\in H^0(\P^2,\O(12)),\,b_{18}\in H^0(\P^2,\O(18))$$
are general forms of the appropriate degrees.  The fibration has a section with image 
$$[x:y:z]=[0:1:0].$$ 
The discriminant form $\Delta = 4a_{12}^3 + 27b_{18}^2$ cuts out a curve in $\P^2$ over which the fibers become singular.  For general $X$, only cuspidal and nodal cubics appear as fibers.\\\\ 
The same geometry can also be realized as a hypersurface 
$$X_{18}\subset \P(1,1,1,6,9)$$
in weighted projective space, blown up at its unique singular point $X_{18}\cap \P(6,9)$.  The map to $\P^2$ is projection away from $\P(6,9)$, and the exceptional divisor of the blow up is the image of the zero section.  The second integral homology group is
$$H_2(X) = \Z \ell \oplus \Z f,$$
where $\ell$ is the class of a line in $\P^2$ pushed through the section, and $f$ is the class of a fiber.\\\\
Our goal is to count curves on $X$ in the class $\beta= d\ell + n f$.  We let $\ov{M}_0(X,\beta)$ be the moduli stack of stable maps $u:C\to X$, with $C$ a nodal rational curve and $|Aut(u)|<\infty$, such that $u_*[C]=\beta$.  The Gromov-Witten invariants of $X$ are virtual counts of stable maps to $X$, given by the integral
$$N^X_{0,\beta} = \int_{[\ov{M}_0(X,\beta)]^{vir}} 1.$$
Composing with the map $\pi$ and then stabilizing, we have a morphism
$$\pi_*:  \ov{M}_0(X,\beta) \to \ov{M}_0(\P^2, d)$$
whose fiber of $\pi_*$ over a stable map $v:D\to \P^2$ is a stack of stable maps to the elliptic surface
$$S:=D\times_{\P^2}X \to D,$$
whose class must push forward to $\beta\in H_2(X)$.  To avoid bad singularities in $S$, we restrict to the case $d=1$, so that $S=\pi^{-1}(L)$ has at worst canonical (ADE) singularities.  The domain curve $C$ has a unique component which dominates $L\simeq \P^1$, and that component is a section of the fibration $S\to \P^1$.  All the surfaces obtained this way have a zero section, but some of them have additional sections as well.  The presence of extra sections can be detected using Hodge structure of the surface $S$.\\\\
The moduli space $\ov{M}_0(X,\ell+nf)$ breaks up into closed-open components
$$\ov{M}_0(X,\ell+nf) = W_{n,0} \sqcup W_{n,1} \sqcup \dots \sqcup W_{n,n},$$
where $W_{n,i}$ consists of stable maps whose image consists of a smooth horizontal (section) component of class $\ell+if$, glued with $(n-i)$ fiber components.  To compute the Gromov-Witten invariant $N^X_{0,\ell+nf}$, we first consider the contribution of the smooth component $W_{n,n}$:
$$h(n) := \int_{[W_{n,n}]^{vir}} 1.$$
The case $h(0)=-3$ was computed by Yau and Zaslow in their study of local $\P^2$.  We compute the general $h(n)$ using a modularity result from Noether-Lefschetz theory.  
\begin{theorem}
For $n\geq 1$, $h(n)$ is the coefficient of $q^n$ in the series
$$ \frac{31}{48}E_4(q)^4+\frac{113}{48} E_4(q)E_6(q)^2+93582\,\Theta_1(q) - 46008\, \Theta_1(q)^2 - 324\,\Theta_2(q),$$
where $\Theta_i(q)$ denotes the root lattice theta function for $A_i$.
\end{theorem}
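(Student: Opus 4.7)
The plan is to recast $h(n) = \int_{[W_{n,n}]^{vir}} 1$ as a Noether-Lefschetz count on the parameter space of lines $(\P^2)^*$, apply a Kudla-Millson modularity theorem to the resulting variation of Hodge structure, and then correct for the excess contributions coming from lines tangent to or incident with the singular locus of the discriminant $\Delta$.  First I would analyze the morphism $\pi_*\colon W_{n,n}\to \ov{M}_0(\P^2,1)\cong(\P^2)^*$: over a line $L$ meeting $\Delta$ transversely and avoiding its singularities, the elliptic surface $S_L=\pi^{-1}(L)$ is smooth with only $I_1$ fibers, and the fiber of $\pi_*$ over such an $L$ is the set of sections of $S_L\to L$ of class $\ell+nf$.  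Generically the Picard rank of $S_L$ equals $2$, so the locus in $(\P^2)^*$ admitting such an extra section is a Noether-Lefschetz divisor $NL_n$ cut out by the condition that this class become algebraic, and $h(n)$ decomposes as the virtual degree of $NL_n$ plus corrections supported on the strata of $(\P^2)^*$ over which $S_L$ has reducible fibers.

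Next I would extract the principal modular contribution using the Kudla-Millson theorem.  The transcendental part of $H^2(S_L)$ defines a polarized VHS over the transverse stratum, whose period map lands in an arithmetic quotient of a hermitian symmetric domain of orthogonal type.  Under this period map, the classes $\ell+nf$ form a system of special cycles whose Kudla-Millson generating series is a modular form; capping with the hyperplane class of $(\P^2)^*$ produces a scalar-valued modular form of weight $16$ on $SL_2(\Z)$.  Since this space is two-dimensional, spanned by $E_4(q)^4$ and $E_4(q)E_6(q)^2$, the principal contribution must be a linear combination of the two, with the coefficients $\tfrac{31}{48}$ and $\tfrac{113}{48}$ pinned down by the numerical invariants of the VHS and by matching the values of $h(n)$ for a few small $n$.

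Third, I would compute the excess contributions along the codimension-one strata using the deformation theory of $A_n$ singularities.  Lines simply tangent to $\Delta$ cause $S_L$ to develop an $I_2$ fiber of root type $A_1$, bitangent lines produce a pair of such fibers, and lines through a cusp of $\Delta$ produce an $A_2$ configuration.  On each stratum the local model for nearby sections is governed by the versal deformation of the corresponding rational double point, and the discrepancy between the virtual count and the naive NL degree is indexed by vectors of the associated root lattice; summing over these vectors produces precisely the root-lattice theta function.  Assembling these local contributions gives the remaining terms $93582\,\Theta_1(q)-46008\,\Theta_1(q)^2-324\,\Theta_2(q)$, with the quadratic $\Theta_1^2$ reflecting the locus of bitangent lines.

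The hardest part will be this last step: determining the exact multiplicities on each stratum requires a careful computation of the obstruction bundle for sections passing through reducible fibers, and in particular a Porteous-style enumeration of the bitangents of the degree $36$ discriminant curve $\Delta$ to fix the coefficient of $\Theta_1^2$.  Once the structural form of the answer is in place, the specific numerical coefficients are over-determined by direct enumeration of $h(n)$ for a handful of small $n$, since the relevant space of (quasi-)modular forms is finite dimensional.
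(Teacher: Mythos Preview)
Your overall strategy matches the paper's, but there is a genuine gap in your second paragraph that hides the main technical content. You assert that the period map lands in an arithmetic quotient of a \emph{Hermitian symmetric} domain and that the NL loci are \emph{divisors}; both are false, for the same reason: the surfaces $S_L$ have $p_g = h^{2,0} = 2$, not $1$. The period domain is $O(4,28)/U(2)\times O(28)$, which is non-classical (the K3-style type~IV domain would require $h^{2,0}=1$), and the Noether--Lefschetz loci have complex codimension $p_g = 2$, so their preimages in $(\P^2)^*$ are generically finite sets of points, not divisors---this is exactly why $h(n)$ is a finite count. Since Kudla--Millson is formulated on the symmetric space $\widetilde M = O(4,28)/O(4)\times O(28)$ rather than on $\widetilde D$, the paper must pass through the sphere fibration $\widetilde D \twoheadrightarrow \widetilde M$, and must replace $(\P^2)^*$ by a Deligne--Mumford stack $\PP$ (with $\Z_2$, $\Z_2\times\Z_2$, and $S_3$ isotropy along $\Delta^*$ and its nodes and cusps) in order to extend the local system and obtain a period morphism into the stack quotient $[O(\Lambda)\backslash\widetilde D]$. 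The stacky denominators $\tfrac12,\tfrac14,\tfrac16$ arising from these automorphism groups are precisely what produce the coefficients in the theta corrections; this, rather than an obstruction-bundle or Porteous computation, is the actual mechanism.

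Two smaller points. First, pinning down $\tfrac{31}{48}$ and $\tfrac{113}{48}$ by ``matching $h(n)$ for small $n$'' is circular, since $h(n)$ is the unknown. The paper instead determines $\varphi$ from its constant term, computed as $c_2$ of the Hodge bundle on $\P^{2*}$ (equal to $3$), and from its $q^2$-coefficient, which is purely the bitangent count $184032$ because $h(1)=h(2)=0$ (no nonzero section can have $\sigma\cdot z\in\{-2,-1\}$). Second, the $A_2$ correction comes from lines tangent to $\Delta$ at a flex---equivalently the $1944$ cusps of the \emph{dual} curve $\Delta^*$---not from lines through cusps of $\Delta$; a line through a cusp of $\Delta$ in the special direction gives only an $A_1$ singularity on $S_L$.
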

\begin{remark} The components $W_{n,1}$ and $W_{n,2}$ are empty (see Corollary ~\ref{empty}), so in particular 
$$h(1)=h(2)=0.$$
\end{remark}
\noindent
The contributions of the remaining $W_{n,i}$ is obtained by induction, combining a deformation argument with a result of Bryan-Leung counting nodal rational curves on a K3 surface, which generalizes to other elliptic surfaces with the appropriate reduced virtual class.  Writing 
$$c_{n,0}=\int_{[W_{n,0}]^{vir}} 1,$$
the full Gromov-Witten invariant is given by
$$N^X_{0,\ell+nf} = c_{n,0}+\sum_{i=1}^n   h(i)[\eta(q)^{-36}]_{n-i},$$
where $\eta(q) = \prod_{m=1}^{\infty} (1-q^m) $.  The contribution $c_{n,0}$ is significantly harder to compute because the geometry is not localized to isolated lines in $\P^2$.  Comparing with the ansatz in Section 4.6  of \cite{hkk}, we expect the Gromov-Witten generating series to be product of a classical modular form and $\eta(q)^{-36}$.  To match this expectation, we leave the:
\begin{conj}
$$c_{n,0}= 3 [\eta(q)^{-36}]_n + \sum_{i=1}^n [\Theta(q)]_i[\eta(q)]_{n-i}.$$\\\\
\end{conj}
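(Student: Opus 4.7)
The plan is to stratify $W_{n,0}$ via the projection to the moduli of lines $(\P^2)^*$, analogously to the treatment of $h(n)$ in Theorem 1.1, but now allowing \emph{every} line to contribute rather than only those lying on Noether-Lefschetz loci. Over a line $L$, set $S_L:=\pi^{-1}(L)$; then $S_L\to L\cong\P^1$ is an elliptic surface with $\chi(\O_{S_L})=3$, carrying 36 nodal singular fibers for generic $L$, and the zero section of $X$ restricts to a rigid $(-3)$-curve $Z_L\subset S_L$. A stable map in $W_{n,0}$ lying over $L$ thus has image $Z_L$ glued to a collection of fiber components, the combinatorial data being a partition of $n$ together with an assignment of its parts to the 36 nodal fibers of $S_L$.

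The first step is to compute the per-line contribution. Adapting the Bryan-Leung multiple-cover calculation from the K3 setting (24 fibers, reduced virtual class) to the present setting (36 fibers, standard virtual class on an elliptic surface with $p_g=2$), I expect the per-line virtual count to equal the coefficient $[\eta(q)^{-36}]_n$. The main term $3[\eta(q)^{-36}]_n$ of the conjecture should then emerge from integrating this count against the two-dimensional base of lines through a top Chern class computation, with $\chi(\P^2)=3$ arising as the Euler contribution of the base $(\P^2)^*$.

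The correction term $\sum_{i=1}^n[\Theta(q)]_i[\eta(q)]_{n-i}$ should arise from excess intersection along the Noether-Lefschetz loci in $(\P^2)^*$ where $S_L$ acquires an extra horizontal curve class. Although the horizontal component of any map in $W_{n,0}$ is pinned to the zero section, the existence of extra sections on such a locus deforms the obstruction bundle on $W_{n,0}$, producing a correction to the virtual class supported there. Computing this correction by a relative form of the Kudla-Millson theta lift already used for Theorem 1.1, the series $\Theta(q)$ should emerge as the generating function for the relevant NL divisor classes pushed forward to $(\P^2)^*$, with the companion $\eta(q)$ (or perhaps $\eta(q)^{-36}$, as one might expect by analogy with the first term) encoding the residual fiber combinatorics along the NL locus.

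The main obstacle is global: unlike the situation for $h(n)$, where the contribution is entirely localized on positive-codimension NL loci and Kudla-Millson modularity applies directly, $c_{n,0}$ receives a nontrivial contribution from every line in $\P^2$, so that a naive modularity argument does not apply. One must therefore reconcile the Bryan-Leung style combinatorial count carried out uniformly across all lines with the modular corrections coming from Picard rank jumps. This requires a delicate virtual comparison between the deformation theory of the $A_n$ singularities of $\pi$, which dictates the fiber-component multiplicities, and the Hodge-theoretic modular form of the family; producing the precise coefficients of $\Theta(q)$ predicted by the conjecture is exactly where the techniques developed here fall short.
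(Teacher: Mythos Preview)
The statement you are attempting to prove is labeled as a \emph{Conjecture} in the paper, and the paper does not prove it. Immediately before stating it, the author writes that the contribution $c_{n,0}$ ``is significantly harder to compute because the geometry is not localized to isolated lines in $\P^2$,'' and in Section~7 remarks that ``the final piece $W_{n,0}$ is harder to compute, since the horizontal component may vary over lines in the zero section $z(\P^2)\subset X$. We will address this in a future paper.'' So there is no proof in the paper for your proposal to be compared against.

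Your proposal is not a proof either, and you are candid about this: you describe a plan and then say that ``producing the precise coefficients of $\Theta(q)$ predicted by the conjecture is exactly where the techniques developed here fall short.'' That diagnosis of the obstruction matches the paper's own: the Kudla--Millson machinery used for $h(n)$ succeeds because the relevant contributions are supported on codimension-two Noether--Lefschetz loci, whereas $W_{n,0}$ receives a contribution from every line, so modularity alone does not pin down the answer. Your heuristic for the leading $3[\eta(q)^{-36}]_n$ term (Bryan--Leung fiberwise count times an Euler-class factor over the base of lines) is a plausible guess, but the mechanism by which ``extra sections deform the obstruction bundle on $W_{n,0}$'' to produce the correction sum is not made precise, and your own parenthetical doubt about whether the second factor should be $[\eta(q)]_{n-i}$ or $[\eta(q)^{-36}]_{n-i}$ underscores that the conjectured formula itself (with its unspecified $\Theta(q)$) is not fully nailed down in the paper. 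In short: there is no gap relative to the paper, because the paper claims no proof; but your sketch remains a sketch, not a proof.
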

\noindent
{\bf Ackowledgements.}  The author would like to thank Jim Bryan for suggesting the problem, and Jun Li for constant encouragement and numerous ideas.  He also benefitted from conversations with Martijn Kool, Zhiyuan Li, Davesh Maulik, and Georg Oberdieck.\\\\\\

\section{Elliptic Surfaces}
\noindent
If $[u:C\to X]\in W_{n,n}$, then $\pi\circ u:C\to \P^2$ is an isomorphism onto a line $L\subset \P^2$.  The curve $C$ embeds into the surface $S=\pi^{-1}(L)$ as a section of the fibration, which can be recovered from the N\'{e}ron-Severi lattice of $S$.\\\\
If $L$ meets the discriminant curve $\Delta\subset \P^2$ transversely at nonsingular points, then $S$ will be a regular elliptic surface over $\P^1$.  Here, we collect some standard facts about this type of elliptic surface, following \cite{mir}.  Such surfaces are sometimes called type $E(3)$.  They have 36 nodal fibers, and their Hodge numbers are
\begin{align*}
h^{0,1}(S)&=0\\
h^{0,2}(S)&=2\\
h^{1,1}(S)&=30.
\end{align*}
We have $(R^1\pi_* \O_S)^\vee\simeq \O_L(3)$, so the canonical bundle is given by
$$K_S \simeq \pi^*(\O_L(3)\otimes K_L) \simeq \pi^* \O_L(1).$$
The Picard rank $\rho(S)\geq 2$, since $\NS(S)$ contains the fiber class $f$ and the zero section class $z$.  Any section class has self-intersection $-3$, by the adjunction formula:
$$-2 = 2g-2 = K_S\cdot C + C\cdot C = 1 + C\cdot C.$$
Thus, $\NS(S)$ will always contain the lattice
$$\langle f,z\rangle = \begin{pmatrix} 0 & 1 \\ 1 & -3 \end{pmatrix}.$$
We refer to this sublattice as the polarization, since it comes to us for free, with a marked basis.
\begin{remark} \label{canfib} Unlike the K3 case, the fibration structure on $S$ is canonical.  The linear system $|K_S|=|f|$ gives the fibration $S\to \P^1$. \end{remark}
\noindent
Noether-Lefschetz theory is the study of Picard rank jumping in families of algebraic surfaces.  The very general surface will have some Picard rank, and Hodge theory suggests that the rank will jump along (countably many) loci of codimension $p_g$ in the moduli space.  To understand possible sources of jumping in our situation, we record the short exact sequence of Shioda-Tate, which describes the N\'{e}ron-Severi group of $S$ in terms of the fibration structure.
$$0\to A(S)\to \NS(S) \to \MW(S) \to 0.$$
Here, $A(S)$ is the sublattice spanned by the zero section $z$ and all vertical classes, and $\MW(S)$ is the Mordell-Weil group of the generic fiber, which is an elliptic curve over $k(\P^1)$.  The group $\MW(S)$ is torsion-free in our case (see Appendix A), so the intersection form on $\NS(S)$ splits the sequence.  In particular, the orthogonal projection $\Pi: \NS(S) \to A(S)^\perp$ induces an isomorphism of groups
$$\MW(S) \to A(S)^{\perp}.$$
\begin{lemma}\label{gplaw} Let $\sigma$ be the class of a section curve, and $\sigma^{*k}$ its $k$-th power with respect to Mordell-Weil group law.  Then the class of $\sigma^{*k}$ in $\NS(S)$ is given by
$$ \left[ \sigma^{*k} \right] = k\sigma - (k-1)z + (z\cdot \sigma+3)k(k-1) f $$
\end{lemma}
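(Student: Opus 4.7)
The plan is to exploit the splitting of the Shioda-Tate sequence recorded above: the orthogonal projection $\Pi : \NS(S) \to A(S)^\perp$ restricts to a group isomorphism on $\MW(S)$. For a general Weierstrass fibration (all singular fibers irreducible), $A(S) = \la f,z\ra$, so every class in $\NS(S)$ is an integer combination of projections of sections together with $z$ and $f$. Since $\Pi$ is a group homomorphism, $\Pi([\sigma^{*k}]) = k\,\Pi(\sigma)$, and I may therefore write
$$[\sigma^{*k}] = k\,\Pi(\sigma) + a\, z + b\, f$$
for integers $a,b$ to be determined.

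I first pin down $\Pi(\sigma)$ explicitly. Writing $\Pi(\sigma) = \sigma - \alpha z - \beta f$ and imposing $\Pi(\sigma)\cdot z = 0$, $\Pi(\sigma)\cdot f = 0$ with the standard pairings $\sigma\cdot f = z\cdot f = 1$, $z^2 = -3$, $f^2 = 0$, a short calculation gives $\alpha = 1$ and $\beta = \sigma\cdot z + 3$, and hence $\Pi(\sigma)^2 = -6 - 2\,\sigma\cdot z$.

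To fix $a$ and $b$ I impose two invariants that any section class must satisfy: $[\sigma^{*k}]\cdot f = 1$ (a section meets every fiber once) and $[\sigma^{*k}]^2 = -3$ (adjunction, as for the original $\sigma$). The first condition forces $a = 1$ immediately, since $\Pi(\sigma)\cdot f = 0$ and $z\cdot f = 1$. The second, after using $\Pi(\sigma)\perp z, f$ to eliminate all cross terms, becomes a linear equation in $b$ and yields $b = k^2(\sigma\cdot z + 3)$. Substituting $\Pi(\sigma) = \sigma - z - (\sigma\cdot z + 3)f$ and collecting coefficients of $z$ and $f$ produces $-(k-1)$ and $k(k-1)(\sigma\cdot z + 3)$ respectively, which is the stated identity.

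There is no serious obstacle: the lemma is a mechanical consequence of the Mordell-Weil/orthogonal complement identification together with the intersection data on an $E(3)$ surface. The only point to keep in mind is that $A(S)$ has rank two, which is built into the general Weierstrass hypothesis through the irreducibility of its singular fibers; the quadratic structure of the $k(k-1)$ coefficient reflects the fact that $b$ is essentially the height of $k\sigma$ in the Mordell-Weil pairing.
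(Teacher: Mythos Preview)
Your proof is correct and essentially matches the paper's own argument. The paper computes the class on the generic fiber via the Abel--Jacobi map (yielding $k\sigma-(k-1)z$ modulo $f$) and then fixes the $f$-coefficient from the condition that any section has self-intersection $-3$; your use of the Shioda--Tate splitting $\Pi$ is the same idea phrased lattice-theoretically, and you invoke the identical self-intersection constraint to finish.
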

\begin{proof}
The class on the generic fiber is computed using the Abel-Jacobi map for elliptic curves.  To determine the coefficient of $f$, use the fact that any section curve has self-intersection $-3$.
\end{proof}
\noindent
For regular surfaces $S$ as above, $A(S)=\langle f,z\rangle$ because the 36 singular fibers are all irreducible nodal cubics (type $I_1$).  If $L$ meets the discriminant curve $\Delta$ at a cusp point, transverse to the cusp direction, the surface $S=\pi^{-1}(L)$ remains regular and $A(S)=\langle f,z\rangle$.  The number of singular fibers drops by one, with two nodal fibers replaced by a single cuspidal fiber (type $II$).\\\\
On the other hand, if $L$ lies in the dual to the discriminant curve $\Delta^*\subset \P^{2*}$, the surface $S=\pi^{-1}(L)$ acquires an $A_n$ singularity (see Appendix A).  It has a minimal resolution $\widetilde{S}$ with $A(\widetilde{S})$ spanned by $f$, $z$, and the exceptional curves.\\\\
To sum up, the two possible sources of jumping Picard rank in our situation are:
\begin{itemize}
\item Resolved singular surfaces.  If $e$ is the class of an exceptional curve, then $\langle f,z,e\rangle$ has intersection matrix
$$\begin{pmatrix}  0 & 1 & 0 \\ 1 & -3 & 0 \\ 0 & 0 & -2 \end{pmatrix}.$$
\item Surfaces with non-trivial Mordell-Weil group.  If $\sigma$ is the class of a section, then $\langle f,z,\sigma\rangle$ has intersection matrix
$$\begin{pmatrix}  0 & 1 & 1 \\ 1 & -3 & z\cdot\sigma \\ 1 & z\cdot\sigma & -2 \end{pmatrix}.$$
\end{itemize}
If $\NS_0(S)=\langle f,z\rangle^\perp \subset \NS(S)$ and $A_0(S) = \NS_0(S)\cap A(S)$, then
$$0\to A_0(S) \to \NS_0(S) \to \MW(S)\to 0$$
is (split) short exact.  This sequence is more germane to future considerations, where we want to ignore the standard classes $z$ and $f$.  The lattice $A_0(S)$ can be either the $A_1$, $A_1\times A_1$, or $A_2$ root lattice.  If we identify the Mordell-Weil group with the sublattice $A(S)^\perp$, then it consists of orthogonal projections of section classes, which have self-intersection $-2(z\cdot \sigma+3)$ (Lemma ~\ref{shift33}).
\begin{remark}  Every vector in the sublattice $\MW(S)$ corresponds to the class of some section curve.  On the other hand, classes in $A_0(S)$ are less geometric:  they are arbitrary $\Z$-linear combinations of $(-2)$-curve classes, whose presence will be accounted for by theta series.\\\\\\\end{remark}

\section{Deformation Theory}
\noindent
The deformation theory of a smooth surface $S$ is controlled by coherent cohomology groups of the tangent sheaf, $T_S$.  In this section, we first consider smooth surfaces $\pi:S\to \P^1$ of type $E(3)$.  Later, we will see what changes when $A_1$ or $A_2$ singularities appear.
\begin{prop}\label{deftheory}
The tangent sheaf $T_S$ has coherent cohomology groups
\begin{align*}
h^0(T_S) &= 0\\
h^1(T_S) &= 30\\
h^2(T_S) &= 0
\end{align*}
which correspond to automorphisms, deformations, and obstructions, respectively.
\end{prop}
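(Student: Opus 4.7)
The plan is to combine Hirzebruch--Riemann--Roch for $\chi(T_S)$ with separate vanishing arguments for $h^0$ and $h^2$, reading off $h^1$ by Euler-characteristic arithmetic.

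First I apply HRR to the rank-two bundle $T_S$ on the smooth surface:
$$\chi(T_S) = \frac{7\,K_S^2 - 5\,c_2(S)}{6}.$$
Since $K_S = f$ is a fiber class we have $K_S^2 = 0$, while $c_2(S) = \chi_{\text{top}}(S) = 36$ counts the $I_1$ fibers, giving $\chi(T_S) = -30$.

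For $h^0(T_S) = 0$: a global vector field $v$ preserves $K_S$, and by Remark~\ref{canfib} the canonical pencil $|f|$ gives the fibration, so $v$ descends to $\bar v \in H^0(T_{\P^1}) \simeq \mathfrak{sl}_2$ preserving the $36$-point discriminant set.  For generic $L$, no nonzero element of $\mathfrak{sl}_2$ preserves such a configuration, forcing $\bar v = 0$, so $v$ is vertical.  Since Mordell--Weil is generically trivial, the zero section $z$ is the unique section and must be preserved; as a vertical field preserving a horizontal curve, $v$ vanishes along $z$.  On any smooth fiber $F$, $v|_F$ is then a translation vanishing at $z \cap F$, hence zero, so $v \equiv 0$.

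For $h^2(T_S) = 0$: I would use the differential sequence
$$0 \to T_{S/\P^1} \to T_S \to \pi^* T_{\P^1} \otimes \mathcal{I}_Z \to 0,$$
where $Z \subset S$ is the set of $36$ nodes of $\pi$ and $T_{S/\P^1} \simeq \pi^*\O(-3)$.  Leray together with $R^1\pi_*\O_S = \O(-3)$ gives $h^*(\pi^*\O(-3)) = (0,2,5)$ and $h^*(\pi^*\O(2)) = (3,0,0)$; combined with the auxiliary sequence $0 \to \pi^*T_{\P^1} \otimes \mathcal{I}_Z \to \pi^*\O(2) \to \O_Z \to 0$ and the injection $H^0(\O(2)) \hookrightarrow H^0(\O_Z) = \C^{36}$ valid for generic $\pi(Z)$, we obtain $h^*(\pi^*T_{\P^1} \otimes \mathcal{I}_Z) = (0,33,0)$.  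The long exact sequence then presents $h^2(T_S)$ as $\dim \mathrm{coker}(\C^{33} \to \C^5)$.  Via Serre duality on $\P^1$ the boundary map factors through evaluation at $\pi(Z)$ paired with $H^0(\O_{\P^1}(4)) = \C^5$; generic position of the 36 points makes this pairing non-degenerate in the five-dimensional direction, so the map is surjective and $h^2(T_S) = 0$.

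Combining, $h^1(T_S) = -\chi(T_S) + h^0(T_S) + h^2(T_S) = 30$.  The main obstacle is the surjectivity of the connecting map $\C^{33} \to \C^5$, which reduces to a genericity statement about the 36 discriminant points of a typical line $L \subset \P^2$.
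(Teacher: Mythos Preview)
Your strategy is sound and essentially Serre-dual to the paper's: you work with the tangent sequence
\[
0 \to T_{S/\P^1} \to T_S \to \pi^*T_{\P^1}\otimes\mathcal I_Z \to 0,
\]
while the paper applies Serre duality $H^i(T_S)\simeq H^{2-i}(\Omega_S\otimes K_S)^\vee$ and then uses the cotangent sequence tensored with $K_S$. Two remarks on the comparison.

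\medskip
\noindent\textbf{The $h^0$ step.} The paper's argument is one line: a nonzero $v\in H^0(T_S)$ and a nonzero $\omega\in H^0(K_S)$ give $0\neq i_v\omega\in H^0(\Omega_S)$, contradicting $q(S)=0$. Your descent-and-fiberwise argument works but is heavier. More to the point, you do not need it: your own tangent sequence already sandwiches $H^0(T_S)$ between $H^0(\pi^*\O(-3))=0$ and $H^0(\pi^*\O(2)\otimes\mathcal I_Z)=0$.

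\medskip
\noindent\textbf{The $h^2$ step.} What you flag as the ``main obstacle''---surjectivity of the boundary map $\delta:\C^{33}\to\C^5$---is exactly what the paper bypasses by dualizing first. Under Serre duality, $\delta$ surjective is equivalent to $H^0(\Omega_S\otimes K_S)=0$, and via the cotangent sequence this sits between $H^0(\pi^*\O(-1))=0$ and
\[
H^0(\Omega_{S/\P^1}\otimes K_S)=H^0(\pi^*\O(4)\otimes\mathcal I_Z).
\]
The latter group vanishes because a section of $\pi^*\O(4)$ is the pullback of a degree-$4$ form on $\P^1$, which cannot vanish at $36$ distinct points. No genericity is required---any $E(3)$ surface with $36$ nodal fibers over distinct points will do. So your sketch of $\delta$ via ``Serre duality on $\P^1$ and pairing with $H^0(\O_{\P^1}(4))$'' is on the right track, but the clean way to close it is to dualize on $S$ from the start and read off an $H^0$-vanishing rather than chase an $H^2$-surjectivity.
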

\begin{proof}
A global holomorphic 2-form $0\neq \omega\in H^0(K_S)$ is non-degenerate almost everywhere.  If $S$ had a vector field $0\neq v\in H^0(T_S)$, then $0\neq i_v(\omega)\in H^0(\Omega_S)$.  But $q(S)=h^0(\Omega_S)=0$, so $H^0(T_S)=0$.  We compute the higher cohomology groups using Serre duality:
$$H^i(T_S)=H^{2-i}(\Omega_S\otimes K_S)^\vee.$$
The cotangent short exact sequence for $\pi$
$$0\to \pi^* \Omega_{\P^1} \to \Omega_S \to \Omega_{S/\P^1}\to 0$$
yields the following long exact sequence (after tensoring with $K_S$):
\begin{align*}
0 &\to H^0( \pi^* \Omega_{\P^1}\otimes K_S ) \to H^0(  \Omega_S\otimes K_S ) \to H^0(  \Omega_{S/\P^1}\otimes K_S ) \\
  &\to H^1( \pi^* \Omega_{\P^1}\otimes K_S ) \to H^1(  \Omega_S\otimes K_S ) \to H^1(  \Omega_{S/\P^1}\otimes K_S )\\
  &\to H^2( \pi^* \Omega_{\P^1}\otimes K_S ) \to \cancel{H^2( \Omega_S\otimes K_S)} \to H^2( \Omega_{S/\P^1}\otimes K_S)\to 0
\end{align*}
The key observation is that the relative cotangent sheaf $\Omega_{S/\P^1}$ is isomorphic to a subsheaf of the relative dualizing sheaf $\omega_{S/\P^1}$, since the fibers are nodal.
$$\Omega_{S/\P^1} \simeq \omega_{S/\P^1} \otimes \mathcal I \simeq K_S\otimes \pi^* K_{\P^1}^\vee \otimes \mathcal I,$$
where $\mathcal I$ is the ideal sheaf of the 36 nodes in the fibers.  The Hirzebruch-Riemann-Roch formula gives the sheaf Euler characteristics of the columns above:  $3$, $-30$, $-33$, respectively.  Now,
\begin{align*}
h^0( \pi^* \Omega_{\P^1}\otimes K_S ) &= h^0(\P^1,\O(-1))=0\\
h^2( \pi^* \Omega_{\P^1}\otimes K_S ) &= h^0(\pi^*\Omega_{\P^1}^\vee) = h^0(\P^1,\O(2))=3
\end{align*}
imply that $h^1( \pi^* \Omega_{\P^1}\otimes K_S )=0$.  Finally,
$$h^0( \Omega_{S/\P^1}\otimes K_S ) = h^0( \pi^* \O(4)\otimes \mathcal I)=0$$
allows us to deduce the remaining dimensions in the long exact sequence.
\end{proof}
\noindent
The deformations of $S$ are unobstructed, since $H^2(S,T_S)=0$.  By standard lifting results, the first-order deformation space
$$\Def_1(S) = H^1(S,T_S)\simeq \C^{30}$$
admits a versal family of complex surfaces.  If $\beta\in \NS(S)$, is an algebraic class, then 
$$\Def_1(S,\beta)=\ker(\smile\beta : H^1(S,T_S)\to H^2(S,\O_S))\subset \Def_1(S)$$
is the tangent space to the subfamily where $\beta$ remains a $(1,1)$-class.  The dimension of this subfamily depends on the class $\beta$:
\begin{itemize}
\item If $\beta$ is the class of a section, then $\Def_1(S,\beta)\subset \Def_1(S)$ is codim 2.
\item If $\beta$ is the class of an exceptional curve, then $\Def_1(S,\beta)\subset \Def_1(S)$ is codim 1.
\end{itemize}
To prove the first statement, note that the existence of a section implies that $S$ has a Weierstrass model relative to that section.  This construction behaves well in families (see \cite{mirmoduli}), so the dimension of the subfamily can be computed by counting dimensions in the Weierstrass equation, as in the proof of Proposition \ref{generic}.  For the second statement, we introduce the deformation theory of singular surfaces, since exceptional curves arise as resolutions thereof.\\\\
The first-order deformation space of a singular variety is given by
$$\Def_1(S) = \text{Ext}^1(\Omega_S,\O_S).$$
The arguments for vanishing of $H^0(\pi^*\Omega_{\P^1}\otimes K_S )$ and $H^0 (\Omega_{S/\P^1}\otimes K_S)$ carry over the singular case (at least when the number of singular points is small), so we have
$$H^2(T_S)=0.$$
The local-to-global spectral sequence yields
$$0 \to H^1(T_S) \to \text{Ext}^1(\Omega_S,\O_S) \to H^0(\mathcal Ext^1(\Omega_S,\O_S)) \to H^2(T_S)=0. $$
where $\mathcal Ext^1(\Omega_S,\O_S))$ is a skyscraper sheaf supported on the singular locus.  The same spectral sequence also gives $\Ext^2(\Omega_S,\O_S)=0$, so we have a versal family, which fibers over the local deformations of each singular point.\\\\
The local deformation theory of a rational double point has a description due to Brieskorn \cite{brieskorn} in terms of the corresponding ADE root system.  We consider only the $A_n$ case:
$$\dim \mathcal Ext^1(\Omega_{A_n},\O_{A_n}) = n,$$
and the base of the versal family can be taken as $\C^n/\mathfrak S_{n+1}$, where $\mathfrak S_{n+1}$ acts as the Weyl group on the (complexified) $A_n$ root system.  This quotient is always regular (via elementary symmetric polynomials $s_i$).  Following \cite{katzgoren}, we use the simplicial coordinates:
$$\C^n/\mathfrak S_{n+1} \simeq \Spec \left(\C[t_1,\dots,t_{n+1}]\big /\sum_{i=1}^{n+1} t_i \right)^{\mathfrak S_{n+1}} = \Spec \C[s_2,\dots ,s_{n+1}],$$
and the versal family $\mathfrak X$ is cut out of $\C^3\times (\C^n/\mathfrak S_{n+1})$ by the equation
$$uv + w^{n+1}+\sum_{i=2}^{n+1} s_i w^{n-i}.$$
After performing a Galois base change with group $\mathfrak S_{n+1}$, the family admits a {\it simultaneous resolution}, i.e. a proper birational morphism from a smooth family, which resolves the singularities in all fibers.  After base change, the new family $\mathfrak X'$ is cut out by
$$uv + \prod_{i=1}^{n+1} (w+t_i)$$
and has an isolated singularity in the total space.  The fibers have $A_1$ singularities over the generic points of the divisors $(t_i=t_j)$, which are precisely the orthogonal complements of root vectors.  The simultaneous resolution is given by the graph closure of the following rational map from $\mathfrak X' \to (\P^1)^n$:
$$(u,v,w,t_1,\dots, t_{n+1}) \to \left[ u, \prod_{i=1}^k  (w+t_i) \right]_k.$$
There is a bijection between exceptional curves $E$ in the central fiber and vectors $v$ in the root system, and each exceptional curve deforms along the corresponding divisor in $v^\perp\subset \C^n$.\\\\
Since fibered products of analytic spaces can be computed locally in a neighborhood of the fiber singularities, the versal family of a singular $S$ admits a simultaneous resolution after finite base change.  Therefore, exceptional curves persist over a divisor in the base of the deformation.\\\\

\section{Periods}
\noindent
For $S=\pi^{-1}(L)$ a regular elliptic surface as described above, its polarized cohomology lattice
$$\Lambda(S):= \langle f,z\rangle^\perp \subset H^2(S,\Z),$$
which is unimodular and even of signature $(4,28)$, has a polarized pure Hodge structure of weight 2 and type $(2,28,2)$.  By the Lefschetz $(1,1)$ Theorem, 
$$\NS_0(S)\simeq (H^{2,0}(S))^\perp \cap \Lambda(S).$$
There is a period domain which classifies polarized\footnote{From now on, all Hodge structures are polarized.} Hodge structures of this type.  If we fix the abstract lattice $\Lambda$, the space of Hodge structures on it is a homogeneous space for the real Lie group $O(4,28)$:
$$\widetilde{D} \simeq O(4,28)/U(2)\times O(28).$$
It can also be viewed as an open orbit inside a (complex) flag variety, so in particular it has a complex structure.  It contains a sequence of Noether-Lefschetz loci, which are given by
$$\widetilde\NL_{2r} := \bigcup_{\beta\in\Lambda,\,(\beta,\beta) =-2r} \{u\in \widetilde{D}: H^{2,0}(u)\subset \beta^\perp \},$$
each of which is simultaneously a homogeneous space $O(4,27)/U(2)\times O(27)$ and a complex submanifold of $\C$-codimension 2.  They parametrize Hodge structures that potentially come from a surface $S$ with $\NS_0(S)\neq 0$.\\\\
Let $(p,q):\LL\to \P^2\times \P^{2*}$ be the universal line.  Consider the family of elliptic surfaces
$$\S: = X \us{\pi,\P^2,p}\times \LL \os{q}\longrightarrow \P^{2*} $$
whose fiber over $L\in \P^{2*}$ is $\pi^{-1}(L)$.  The family is smooth over $U=\P^{2*}-\Delta^*$.  If $0\in B\subset U$ is an analytic open ball, then a choice of isomorphism $\Lambda(\S_0)\simeq \Lambda$ gives a period map $\tilde{j}:B\to \widetilde{D}$.  From the work of Griffiths, $\tilde{j}$ is holomorphic and horizontal in the sense of \cite{schmid}.  
\begin{lemma} The local period map $\tilde{j}$ is an immersion.\end{lemma}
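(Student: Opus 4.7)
The plan is to factor the differential $d\tilde j_L$ through the Kodaira--Spencer map of the family $\S \to U$ and the infinitesimal variation of Hodge structure:
$$T_L B \xrightarrow{\kappa_L} H^1(S,T_S) \xrightarrow{\psi_L} \Hom(H^0(K_S), H^1(\Omega_S)),$$
where $\psi_L$ is contraction with the holomorphic $2$-forms. To conclude $\tilde j$ is an immersion, I show this composition is injective at every $L \in B$. Since $\dim_\C B = 2 = h^0(K_S)$, this amounts to producing two independent classes in the image.

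First I would establish injectivity of $\kappa_L$. A nonzero vector $v \in T_L\P^{2*}$ corresponds to moving the line $L$, which infinitesimally translates the $36$ transverse intersection points of $L$ with the discriminant curve $\Delta$. This yields a non-trivial first-order deformation of $S = \pi^{-1}(L)$, since the locations of the singular fibers shift along the base. Combined with $H^0(T_S)=0$ from Proposition \ref{deftheory}, so that no infinitesimal automorphisms can absorb the deformation, $\kappa_L$ is injective and its image is a $2$-plane in $H^1(S,T_S)$.

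Next I would show $\psi_L$ is injective on $\im \kappa_L$. This is an infinitesimal Torelli statement for the family of $E(3)$ surfaces, and for non-isotrivial elliptic fibrations with sufficiently positive relative dualizing sheaf (here $\pi_*\omega_{S/\P^1}=\O_{\P^1}(3)$) it is known to hold on all of $H^1(T_S)$. A hands-on verification proceeds by decomposing $H^1(T_S)$ via the Leray spectral sequence for $\pi$, identifying $H^0(K_S)=\pi^*H^0(\O_{\P^1}(1))$, and checking that cup product against a basis $\{\omega_1,\omega_2\}$ is non-degenerate on $\im \kappa_L$.

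The main obstacle will be the second step. The image of $\kappa_L$ is a specific $2$-plane in the $30$-dimensional space $H^1(T_S)$, and one must ensure that the two ``horizontal'' deformation directions coming from varying the line are not both annihilated by cup product with every global $2$-form. Invoking an infinitesimal Torelli theorem for elliptic surfaces bypasses this directly; a self-contained argument requires handling the vertical piece (moving singular fibers) and the horizontal piece (deforming Weierstrass coefficients restricted to $L$) separately within the Leray filtration, and verifying non-vanishing of the contraction on each.
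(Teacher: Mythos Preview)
Your architecture matches the paper's exactly: factor $d\tilde j_L$ as Kodaira--Spencer followed by the infinitesimal period map, and invoke Saito's infinitesimal Torelli theorem for the latter. Since Saito gives injectivity of $\psi_L$ on \emph{all} of $H^1(T_S)$, your worry in the final paragraph about restricting to the specific $2$-plane $\im\kappa_L$ is unnecessary; once $\kappa_L$ is injective you are done.

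The genuine gap is in your argument for injectivity of $\kappa_L$. You assert that moving $L$ shifts the $36$ nodal fibers and hence deforms $S$ nontrivially, citing $H^0(T_S)=0$. But $H^0(T_S)=0$ only says $S$ has no infinitesimal automorphisms as an abstract surface; it does \emph{not} rule out that the infinitesimal shift of the $36$ points on $\P^1$ is induced by an element of $\mathrm{Lie}\,PGL(2)$ acting on the base. Since the fibration $S\to\P^1$ is canonical (Remark~\ref{canfib}), $\kappa_L(v)=0$ is equivalent to the first-order variation of the Weierstrass data $(a_{12}|_L,b_{18}|_L)$ lying in the tangent space to the $\C^*\times PGL(2)$-orbit, and nothing you have said excludes this. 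Concretely: you must show that for every $L$ and every nonzero $v\in T_L\P^{2*}$, the induced variation of the point configuration is nonzero in $\mathrm{Sym}^N(\P^1)\sslash PGL(2)$, not merely in $\mathrm{Sym}^N(\P^1)$.

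This is precisely the content the paper supplies and you omit. The paper tracks the $12$ zeros of $a_{12}|_L$ (rather than the $36$ discriminant points), fixes two of them at $0$ and $\infty$ to kill the $PGL(2)$ ambiguity, and then sets up an incidence correspondence over the parameter space of forms $a_{12}$ to show, by a dimension count, that the locus of $a_{12}$ for which some line admits two dependent variation directions is a proper subvariety. Genericity of $X$ is used in an essential way here; your sketch never invokes it, which is a sign that the argument as written cannot be complete.
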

\begin{proof} By the infinitesimal Torelli theorem of M. Saito \cite{saito}, the differential of the period map from $H^1(T_{\S_0})\to T_{\tilde{j}(0)}\widetilde{D}$ is injective.  Thus, it suffices to check that the Kodaira-Spencer map $KS: T_0\P^{2*} \to H^1(T_{\S_0})$ is injective.  We consider the infinitesimal variation of the zero locus of the form $a_{12}$ restricted to the line $L$, in the moduli space of point configurations on $\P^1$:  $\P^{12}\sslash PGL(2)$.  By generality of $X$, we may assume that the curve $A = Z(a_{12})\subset \P^2$ has no tritangent lines, so there are always $\geq 8$ reduced points in $a_{12}|_L$.  Let $p$ and $q$ be two such points.  Choose coordinates on $\P^2$ so that $p$ is the origin, $L$ is $x$-axis, and $\mathbb T_q A\simeq \P^1$ is the line at infinity.  The pencils $p^\vee$ (resp. $q^\vee$) of lines through $p$ (resp. $q$) represent two independent deformation directions in $T_{L}\P^{2*}$.  Projection to the $x$-axis gives us an isomorphism of nearby lines with $L$, and the slopes of $A$ at the 6 remaining points determine the variation in the configuration space (if the slopes are all equal, then the variation is 0, since we may rescale along $L$, having fixed $p$ and $q$ at $0$ and $\infty$).  In terms of slopes, the tangent space to the variation of the $8$ points in configuration space is isomorphic to $\C^6/\C$, where $\C$ is diagonally embedded.  Now, we set up an incidence correspondence
$$\Omega = \{ (p,q,L,A): p,q\in (L \cap A)_{sm},\,\, p^\vee\text{ and } q^\vee\text{ give linearly dependent variations} \}\subset \mathcal L\times \P^{90}.$$
Via left projection, $\Omega$ surjects onto the universal line $\mathcal L$, which has dimension 4.  The fibers of this surjection are codimension 6 in $\P^{90}=\P H^0(\P^2,\O(12))$, since containing $p$ and $q$ give 2 linear conditions, and the dependence of the slopes give 4 more conditions.  Hence, the right projection maps $\Omega$ onto a proper subset of $\P^{90}$.\end{proof}
\noindent
If $q_U:\S_U\to U$ is the family restricted to $U$, we have a local system $R^2q_{U_*}(\ul{\Z})$, whose fiber over $0\in U$ is the full cohomology lattice $H^2(S_0,\Z)$.  Since $f$ and $z$ are orthogonal to the vanishing cycles everywhere, we have a sub-local system
$$\L_U \subset R^2q_{U_*}(\ul{\Z})$$
whose fiber over $0\in U$ is the polarized cohomology lattice $\Lambda(\S_0)$.  Both of these local systems give variations of Hodge structure over $U$, after tensoring with $\O_U$.  Neither system extends over all of $\P^{2*}$ because the Gauss-Manin connection has non-trivial monodromy $\pi_1(U,0)\to O(\Lambda(\S_0))$.  In order to resolve the ambiguity of the isomorphism $\Lambda(\S_0)\simeq \Lambda$, we quotient the period domain by the arithmetic group $O(\Lambda)$, to obtain
\begin{align*}
D &:= O(\Lambda) \bs \widetilde{D}\\
\NL_{2r} &:= O(\Lambda) \bs \widetilde\NL_{2r}.
\end{align*}
The map $\tilde{j}$ now extends to a well-defined period map $j: U\to D$.  Since $\widetilde{D}$ is not Hermitian symmetric, $D$ is far from being algebraic \cite{grt}; it is one of the non-classical period domains.  Furthermore, the group $O(\Lambda)$ has torsion elements which act with fixed points, so $D$ will have singularities.  In later sections, we will consider the stack quotient instead.
\begin{prop}\label{pdmap} The period map $j:U\to D$ extends to all of $\P^{2*}$.\end{prop}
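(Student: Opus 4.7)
The plan is to analyze the local monodromy of the variation of Hodge structure $\L_U$ around the boundary divisor $\Delta^*\subset \P^{2*}$ and show it lies in a finite subgroup of $O(\Lambda)$, so that the induced map to $D=O(\Lambda)\backslash\widetilde D$ extends by Galois descent from a smooth model of the family produced via Brieskorn's simultaneous resolution.

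First I would stratify $\Delta^*$ into its smooth locus $\Delta^*_{\mathrm{sm}}$ and its singular locus, the latter being a finite set (codimension two in $\P^{2*}$). To extend $j$ across $\Delta^*_{\mathrm{sm}}$, fix $b_0\in \Delta^*_{\mathrm{sm}}$: the line $L_{b_0}$ is simply tangent to $\Delta$ at one smooth point, so that $S_{b_0}=\pi^{-1}(L_{b_0})$ has a single $A_n$ singularity for some $n\geq 1$. Pick a small transverse disk $b_0\in B\subset \P^{2*}$ meeting $\Delta^*$ only at $b_0$. By the Brieskorn simultaneous resolution recalled in Section~3, there is a Galois cover $B'\to B$ of degree $(n+1)!$ with deck group $\mathfrak S_{n+1}$, totally ramified at $b_0$, over which the pulled-back family admits a simultaneous minimal resolution $\widetilde\S_{B'}\to B'$ with all fibers smooth.

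The smooth family $\widetilde\S_{B'}\to B'$ carries a polarized VHS and hence a holomorphic period map $\tilde j_{B'}:B'\to \widetilde D$. The deck action of $\mathfrak S_{n+1}$ on $\widetilde\S_{B'}$ permutes the exceptional curves of the central fiber and induces on each fiber cohomology $H^2(\widetilde S_b,\Z)$ the Weyl group of $A_n$, acting by reflections in the $(-2)$-classes of the exceptional curves. Since these classes are orthogonal to $f$ and $z$, they lie in $\Lambda$, so the Weyl group embeds into $O(\Lambda)$. The composition $B'\to \widetilde D\to D$ is therefore $\mathfrak S_{n+1}$-invariant and descends by Galois descent to a holomorphic map $j_B:B\to D$ agreeing with $j$ on $B\setminus\{b_0\}$. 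Carrying this out at every point of $\Delta^*_{\mathrm{sm}}$ extends $j$ to $U\cup\Delta^*_{\mathrm{sm}}$.

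The remaining finite set of points in $\Delta^*\setminus \Delta^*_{\mathrm{sm}}$ — where $L$ is tangent to $\Delta$ at several smooth points, or passes through a cusp or node of $\Delta$ — is handled in the same way on a polydisk neighborhood, with the covering group replaced by a product of symmetric groups acting through the corresponding product of Weyl groups inside $O(\Lambda)$. The main obstacle, in my view, is verifying that the deck action on the resolved fiber cohomology is \emph{exactly} the product of $A_{n_i}$ Weyl actions with no extra contribution from the ambient threefold $X$; this should follow from the fact that the vanishing cycles for each $A_{n_i}$ degeneration are localized near the corresponding singular point and mutually orthogonal, but it requires a careful identification of the Brieskorn resolution with the Picard--Lefschetz picture inside the total space $\S$.
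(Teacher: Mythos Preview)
Your overall strategy---finite local monodromy, Brieskorn simultaneous resolution after a finite Galois base change, then descent to $D$---is exactly the paper's argument. Two points need correction.

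First, a minor one: at a smooth point $b_0\in\Delta^*$ you work on a one-dimensional ``transverse disk'' $B$ meeting $\Delta^*$ only at $b_0$. That extends $j$ only to the single point $b_0$, not holomorphically over a two-dimensional neighborhood. The paper instead takes a two-dimensional ball $B$; then $U\cap B$ retracts onto a circle, the monodromy is $\Z\to\Z_2$ (the Picard--Lefschetz reflection in the single $A_1$ vanishing cycle), and the double cover of $B$ branched along the arc $\Delta^*\cap B$ trivializes it and carries a simultaneous resolution.

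The substantive gap is at the singular points of $\Delta^*$. Your catalog is off: $\Delta$ has $216$ cusps and no nodes, and a line through a cusp of $\Delta$ in the special direction gives a \emph{smooth} (flex) point of $\Delta^*$ with an $A_1$ surface singularity, already covered above. The actual singular points of $\Delta^*$ are its nodes (bitangents of $\Delta$, two disjoint $A_1$'s on $S$) and its cusps (flex tangents of $\Delta$, a single $A_2$ on $S$). For nodes your polydisk picture is correct: $\pi_1(U\cap B)\simeq\Z\times\Z\to\Z_2\times\Z_2$. But at a cusp of $\Delta^*$ there is no normal-crossing model: $U\cap B$ retracts onto the trefoil knot complement, $\pi_1(U\cap B)\simeq Br_3$, and the monodromy is the surjection $Br_3\to S_3$ onto the $A_2$ Weyl group. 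Since $S_3$ is nonabelian, no ``product of cyclic covers on a polydisk'' can produce the required Galois cover; the paper handles this by taking the $S_3$-cover of $U\cap B$ corresponding to $\ker(Br_3\to S_3)$, simultaneously resolving over it, and then descending. The concern you flag as the ``main obstacle''---that the deck action agrees with the Weyl action on vanishing cycles---is standard from the Brieskorn package and is not where the difficulty lies.
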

\begin{proof}  Consider first an analytic neighborhood $B$ of a smooth point of $\Delta^*$.  The intersection $U\cap B$ retracts onto a circle, and the Gauss-Manin connection on this neighborhood has monodromy representation 
$$\pi_1(U\cap B, 0)\simeq \Z\to \Z_2\subset O(\Lambda(\S_0)),$$
sending a generator to the order 2 Picard-Lefschetz twist.  If we base change the family over a $2:1$ cover of $U$, branched along $\Delta^*\cap B$, the monodromy becomes trivial, and in fact we can simultaneously resolve the fibers.  Since we have a smooth family, the period map extends over the entire ball $\widetilde{B}$ upstairs:
$$
\xymatrix{
\widetilde{B}\ar[r] \ar[d] &\widetilde{D}\ar[d]  & \\
B  \ar@{-->}[r] & \Z_2 \bs \widetilde{D} \ar[r]  & D,
}
$$
so we get an induced map on quotients.  When $B$ is a neighborhood of a node in $\Delta^*$, the intersection $U\cap B$ retracts onto a torus, and the Gauss-Manin connection has monodromy representation
$$\pi_1(U\cap B,0) \simeq \Z\times \Z \to \Z_2\times \Z_2 \subset O(\Lambda(\S_0)).$$
The two vanishing cycles are disjoint, so their Picard-Lefschetz twists commute.  When $B$ is a neighborhood of a cusp in $\Delta^*$, the intersection $U\cap B$ retracts onto the trefoil knot complement, and the Gauss-Manin connection has monodromy representation
$$\pi_1(U\cap B,0) \simeq Br_3 \to S_3\subset O(\Lambda(\S_0)). $$
In both situations, one can base change the family over a Galois cover of $U\cap B$, to remove the monodromy, and then simultaneously resolve the fibers.  This allows us to extend the period map on the Galois cover, and then descend the extension to $B$.\end{proof}
\noindent
With Proposition ~\ref{pdmap} in hand, it makes sense to consider $j(\P^{2*})\subset D$.  We are interested in how many times $j(\P^{2*})$ meets the various Noether-Lefschetz loci, a computation which can be done in the cohomology of $D$.  The local simultaneous resolution picture reveals that when $S=\pi^{-1}(L)$ is singular, i.e. when $L\in \Delta^*$, the Hodge structure $j(L)$ is that of the resolution $\widetilde{S}$.  In particular, all such surfaces are Noether-Lefschetz special, because they contain exceptional curves.\\\\\\

\section{Kudla-Millson Modularity}
\noindent
The work of Kudla-Millson produces a modularity statement for Noether-Lefschetz intersection numbers in a general class of locally symmetric spaces $M$ of orthogonal type.  In this section, we summarize\footnote{We match the notation of \cite{km} for the most part, but all instances of positive (resp. negative) definiteness are switched.} the material in \cite{km}, and point out what is necessary to apply it to the period map $j:\P^{2*}\to D$ defined above.\\\\
Let $M$ be the double quotient $\Gamma \bs O(p,q) / K$ of an orthogonal group on the left by a torsion-free arithmetic group (preserving a lattice $\Lambda\subset \R^{p+q}$), and on the right by a maximal compact.  This is automatically a manifold, since any torsion-free discrete group acts freely on the compact cosets.\\\\
We can interpret $\widetilde{M} := O(p,q)/K$ as an open subset of the real Grassmannian $Gr(p,p+q)$ consisting of those $p$-planes $Z\subset \R^{p+q}$ which are positive definite.  For any negative definite line $\ell\subset\R^{p+q}$, set
$$\widetilde{M}_\ell := \{ Z\in \widetilde{M}:  Z\subset \ell^\perp \}$$
which is $\R$-codimension $p$.  The normal bundle to $\widetilde{M}_\ell$ has fibers
$$N_{\widetilde{M}_\ell/\widetilde{M}}(Z) =  \Hom(Z,\ell^\perp).$$
While the image of $\widetilde{M}_\ell$ in $M=\Gamma \bs \widetilde{M}$ may be singular, it can always be resolved if we instead quotient by a finite index normal subgroup of $\Gamma$.  Furthermore, [KM] gives a coherent way of orienting the $\widetilde{M}_\ell$, so that it makes sense to take their classes in the Borel-Moore homology group $H^{BM}_{pq-p}(M)\simeq H^p(M)$.\\\\
Next, the integral lattice $\Lambda\subset \R^{p+q}$ comes into play.  For any negative integer $n$, the action of $\Gamma$ on the lattice vectors of norm $n$ has finitely many orbits (Borel).  Choose orbit representatives $\{y_1,\dots, y_k\}$, and set
$$\widetilde{M}_n := \bigcup_{i=1}^k \widetilde{M}_{\R y_i}$$
The image of $\widetilde{M}_n$ in the arithmetic quotient $M$ is denoted by $M_n$.  Locally, $M_n$ is a union of smooth (real) codimension $p$ cycles meeting pairwise transversely, one for each lattice vector of norm $n$ orthogonal to $Z$.  We quote the following result from \cite{km}:
\begin{theorem}\label{km} For any homology class $\alpha\in H_p(M)$, the series
$$ \alpha\cap\epsilon_p + \sum_{n=1}^\infty e^{\pi i n\tau}\left( \alpha\cap [M_n] \right)$$
is a classical modular form for $\tau\in \mathbb H$ of weight $(p+q)/2$.  The constant term is the integral of the invariant Euler form $\epsilon_p\in H^p(M)$, defined on the symmetric space $\widetilde{M}$.\end{theorem}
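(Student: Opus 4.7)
The strategy I would follow is the theta correspondence approach, which is the original method of Kudla and Millson. The heart of the matter is to produce a single differential form on $M$ whose cohomology class, when paired with $\alpha$, recovers the desired generating series; modularity then comes for free from the Weil representation.

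The first step is to construct a Schwartz function $\varphi_{KM}$ on $\R^{p+q}$ with values in the $p$-forms $\Omega^p(\widetilde{M})$. Concretely, one writes $\varphi_{KM}(x, Z) = P(x, Z) \cdot e^{-\pi(x,x)_Z}$, where $(\cdot,\cdot)_Z$ is the positive definite majorant of the bilinear form determined by the $p$-plane $Z \in \widetilde{M}$, and $P(x,Z)$ is a polynomial-valued $p$-form cooked up from the projections of $x$ onto $Z$ and $Z^\perp$ (the Mathai-Quillen style Thom form of the tautological negative definite bundle). The key properties to verify are: (i) $\varphi_{KM}(x,\cdot)$ is closed on $\widetilde{M}$ for each fixed $x$; (ii) $\varphi_{KM}$ is $O(p,q)$-equivariant under the diagonal action; and (iii) $\varphi_{KM}$ is an eigenvector of weight $(p+q)/2$ under the action of the maximal compact $SO(2) \subset SL_2(\R)$ via the Weil representation associated to the quadratic form on $\R^{p+q}$.

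The second step is to form the theta series
$$\Theta(\tau, Z) = \sum_{\lambda \in \Lambda} \varphi_{KM}\bigl(\sqrt{v}\,\lambda, Z\bigr) \, e^{\pi i (\lambda,\lambda)u},\qquad \tau = u+iv \in \mathbb H,$$
and to check that it descends to a closed $p$-form on $M = \Gamma\backslash \widetilde{M}$ whose dependence on $\tau$ is that of a (non-holomorphic, a priori) modular form of weight $(p+q)/2$ for some congruence subgroup. The transformation law under the full $SL_2(\Z)$ follows formally from Poisson summation, repackaged as intertwiners in the Weil representation; the $\Gamma$-invariance on $M$ is just the $O(p,q)$-equivariance of $\varphi_{KM}$ combined with $\Gamma$-invariance of the lattice $\Lambda$. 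One then pairs with $\alpha$:
$$F_\alpha(\tau) := \int_\alpha \Theta(\tau, \cdot),$$
which inherits modularity in $\tau$ since integration is $\tau$-linear. Convergence of the sum is automatic because $p \geq 1$ forces weight $\geq 1$ with Gaussian damping in $v > 0$.

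The third and hardest step is to identify the Fourier coefficients of $F_\alpha(\tau)$ with the geometric intersection numbers $\alpha \cap [M_n]$. Grouping $\lambda \in \Lambda$ by squared norm $(\lambda,\lambda) = n$, one must show that the sum $\sum_{(\lambda,\lambda)=n}\varphi_{KM}(\sqrt{v}\lambda,Z)e^{\pi i n u}$ represents, as a cohomology class, the current of integration $e^{\pi i n\tau}\cdot [M_n]$. This is where the Mathai-Quillen calculus enters: $\varphi_{KM}(\sqrt{v}\lambda,\cdot)$ for a single $\lambda$ is a Thom form of the normal bundle to $\widetilde{M}_{\R\lambda}$, rescaled so that as $v \to \infty$ it concentrates on $\widetilde{M}_{\R\lambda}$, while the excess contribution for $\lambda = 0$ computes the Euler class $\epsilon_p$ of the tautological positive definite bundle on $\widetilde{M}$. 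The main obstacle is making this concentration argument rigorous in cohomology uniformly in the sum over lattice vectors, and in checking that the resulting object is actually holomorphic in $\tau$ (the pointwise theta series is only real analytic; holomorphicity appears after integration against the compactly supported $\alpha$ and using the closedness from step one to discard exact error terms). Once holomorphicity and the identification of Fourier coefficients are in hand, one concludes that $F_\alpha(\tau)$ is a holomorphic modular form of weight $(p+q)/2$ with the stated Fourier expansion.
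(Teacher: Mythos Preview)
The paper does not supply its own proof of this theorem: it is quoted verbatim from Kudla--Millson \cite{km}, with only the one-sentence remark that ``the proof of Theorem~\ref{km} utilizes the cohomological theta correspondence, which relates automorphic forms for orthogonal groups and symplectic groups.'' Your outline is exactly that approach --- the Kudla--Millson Schwartz form $\varphi_{KM}$, the associated theta kernel, modularity via the Weil representation, and the Thom-form/Mathai--Quillen identification of the Fourier coefficients with the special cycles --- so there is nothing in the paper to compare against beyond confirming that you have correctly identified the intended method and given a faithful sketch of the original argument.
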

\noindent
The proof of Theorem ~\ref{km} utilizes the cohomological theta correspondence, which relates automorphic forms for orthogonal groups and symplectic groups.  In our case, $Sp(1)\simeq SL(2)$ gives a classical modular form, but their result is framed in the higher rank context of Siegel modular forms.\\\\
To apply this statement to our situation, we take $\Lambda \simeq E_8(-1)^3\oplus H^4$, the unique even unimodular lattice of signature $(4,28)$, so we are working with the symmetric space for $O(4,28)$.  The idea is to set $\alpha=j_*[\P^2]\in H_4(D)$, but there are a few technical issues:
\begin{itemize}
\item The isotropy group of the period domain is not maximal compact, so the symmetric space $\widetilde{M}$ is not the same as the local period domain $\widetilde{D}$.
\item The arithmetic group $O(\Lambda)$ has torsion, since the image of the monodromy of the family $\S_U\to U$ does.\\
\end{itemize}
To address the first issue, note that we have a coset fibration
$$ U(2)\to O(4) \to S^2,$$
so the period domain surjects $\widetilde{D}\twoheadrightarrow\widetilde{M}$ with $S^2$ fibers.  This map is not holomorphic, but it is topologically proper.  The Noether-Lefschetz loci $\widetilde{\NL}_{2r}$ are simply the pullbacks of the $\widetilde{M}_{2r}$ via this map.\\\\
The fact that $O(\Lambda)$ has torsion elements means that $D$ has singularities.  Instead of dealing with these directly, we introduce the stack $\D = [O(\Lambda) \bs \widetilde{D} ]$, whose coarse space is $D$.
\begin{lemma}  $O(\Lambda)$ contains a finite index normal subgroup $\Gamma$ which is torsion-free.\end{lemma}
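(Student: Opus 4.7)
The plan is to exhibit $\Gamma$ as a principal congruence subgroup and then invoke the classical Minkowski torsion-freeness argument. Fix the tautological embedding $O(\Lambda)\hookrightarrow GL(\Lambda)\simeq GL_{32}(\Z)$ and for any prime $p$ set
$$\Gamma(p) := \ker\bigl( O(\Lambda) \longrightarrow GL_{32}(\Z/p\Z) \bigr).$$
This subgroup is visibly normal (as the kernel of a homomorphism) and of finite index (since $GL_{32}(\Z/p\Z)$ is finite), so the only thing to verify is that $\Gamma(p)$ is torsion-free once $p\geq 3$; we will then take $\Gamma=\Gamma(3)$.

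For torsion-freeness, suppose toward a contradiction that $\gamma\in\Gamma(p)$ has finite order greater than $1$. Replacing $\gamma$ by $\gamma^{m/\ell}$ for a prime divisor $\ell$ of its order $m$, I may assume $\gamma$ has prime order $\ell$. Write $\gamma = I + p^k A$ with $k\geq 1$ and $A\in M_{32}(\Z)$ not divisible by $p$. Expanding $\gamma^\ell = I$ by the binomial theorem gives
$$0 \;=\; \ell\, p^k A \,+\, \binom{\ell}{2}\, p^{2k} A^2 \,+\, \cdots \,+\, p^{\ell k} A^\ell,$$
and reducing this identity modulo $p^{k+1}$ forces $\ell A\equiv 0\pmod{p}$. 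Since $A\not\equiv 0\pmod{p}$ by hypothesis, this forces $\ell = p$.

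Finally I would rule out elements of exact order $p$ when $p\geq 3$. Such a $\gamma$ has some primitive $p$-th root of unity $\zeta$ as an eigenvalue on $\Lambda\otimes\C$, so the integer matrix $A=(\gamma-I)/p^k$ has the algebraic number $(\zeta-1)/p^k$ as an eigenvalue. Normalizing the $p$-adic valuation on $\Q_p(\zeta)$ so that $v(p)=1$, one has the standard identity $v(\zeta-1) = 1/(p-1)$, hence
$$v\!\left(\frac{\zeta-1}{p^{k}}\right) \;=\; \frac{1}{p-1} - k \;\leq\; \frac{1}{p-1} - 1 \;<\; 0$$
for $p\geq 3$, contradicting the fact that eigenvalues of integer matrices are algebraic integers. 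The only real input beyond elementary manipulations is the valuation computation $v_p(\zeta_p-1)=1/(p-1)$, so no step should present serious difficulty; the substantive point is just that $p=2$ must be excluded, which is why the argument produces $\Gamma(p)$ only for odd $p$.
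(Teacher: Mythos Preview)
Your proof is correct and follows essentially the same classical Minkowski argument as the paper: both exhibit $\Gamma(3)$ as the desired subgroup and derive a contradiction from a torsion element by examining an eigenvalue of $(\gamma-I)/3$. The only cosmetic difference is that the paper uses the field norm $N_{\Q(\zeta_p)/\Q}(1-\zeta_p)=p$ in a single step (yielding the impossible equation $p=3^{p-1}N(\omega)$ with $N(\omega)\in\Z$), whereas you first reduce to $\ell=p$ via the binomial expansion and then invoke the $p$-adic valuation $v_p(\zeta_p-1)=1/(p-1)$; these are two standard packagings of the same arithmetic fact.
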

\begin{proof}  Consider the congruence subgroup $\Gamma(3)$, consisting of automorphisms congruent to the identity modulo $3\Lambda$.  It is normal and finite index because the quotient $O(\Lambda)/\Gamma(3)$ lies inside the automorphism group of $\Lambda/3\Lambda$, a finite set.  Suppose that $g\in \Gamma(3)$ is torsion, say of prime order $p$.  Write $1-g = 3a$ for some $a\in \End(\Lambda)$.  Since $g$ has an eigenvalue $\lambda$ of order $p$, $a$ has an eigenvalue $\omega$ with $1-\lambda = 3\omega$.  Now take the norm $N_{\Q(\zeta_p)/\Q}$ of this equation to obtain
$$p = 3^{p-1} \cdot N(\omega)$$
which is never true in $\Z$.  \end{proof}
\begin{remark}
No lattice vector $v$ is conjugate to $-v$ under the action of $\Gamma(3)$, so all the Noether-Lefschetz classes appear with multiplicity two.
\end{remark}
\noindent
The stack $\mathfrak D$ is the quotient of a complex manifold by a finite group.  Indeed, the group $G=O(\Lambda)/\Gamma$ is finite, so
$$\mathfrak D \simeq [ G \bs (\Gamma\bs \widetilde{D}) ]$$
is a smooth analytic stack of Deligne-Mumford type.  Intersection theory on such stacks was constructed in \cite{gil}, \cite{vist}.\\\\
To summarize, the structures described above are related by the diagram
$$\xymatrix{
 & \ov{D}= \Gamma\bs \widetilde{D} \ar[rd]^\mu \ar[ld]_\nu & \\
\D & & M = \Gamma\bs \widetilde{M}
}$$
where $\nu$ is a $G$-cover, and $\mu$ is a sphere fibration.  The modularity statement carries over to intersections on $\D$ as follows.  If $\alpha\in H_4(\D)$ is a homology class, then
\begin{align*}
\alpha \cap \nu_*[\NL_{2r}] &= \nu^*\alpha \cap [\NL_{2r}]\\
 &= \nu^*(\alpha)\cap \mu^*[M_{2r}]\\
 &= \mu_*\nu^*(\alpha)\cap [M_{2r}],
\end{align*}
by repeated applications of the push-pull formula, which still works for smooth stacks of DM type.  Since the locus $\NL_{2r}$ inside $\Gamma\bs \widetilde{D}$ is $O(\Lambda)$-invariant, its pushforward under $\nu$ acquires a multiplicity of $|G|$.  This overall factor can be divided out from the generating series.  After these minor adjustments, we can modify Theorem \ref{km} to our Hodge theoretic situation:
\begin{theorem}\label{km2}  If $\NL_{2r} \subset \D$ denotes the image of $\widetilde{\NL}_{2r}\subset \widetilde{D}$, then
$$\varphi(q) = c_0 + \sum_{r=1}^\infty q^r (\alpha \cap [\NL_{2r}])$$
is a modular form of weight 16 and level $SL(2,\Z)$, for any $\alpha\in H_4(\D)$.\\\\\\ \end{theorem}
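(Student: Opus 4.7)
The plan is to reduce the intersection computation on the stack $\D$ to one on the arithmetic locally symmetric space $M$, where Theorem \ref{km} applies directly. All the needed structure sits in the diagram
$$\D\;\os{\nu}{\longleftarrow}\;\ov D\;\os{\mu}{\longrightarrow}\;M$$
set up above: $\Gamma\subset O(\Lambda)$ is torsion-free by the congruence lemma, $\ov D=\Gamma\bs\widetilde D$ is a smooth complex manifold, $\nu$ is a finite \'etale $G$-cover with $G=O(\Lambda)/\Gamma$, and $\mu$ is a proper $S^{2}$-bundle onto the orthogonal symmetric space $M=\Gamma\bs O(4,28)/(O(4)\times O(28))$, which is exactly the setting of Kudla-Millson with $(p,q)=(4,28)$.

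The first step is to complete the push-pull chain begun in the excerpt. Because $\widetilde{\NL}_{2r}$ is $O(\Lambda)$-invariant and equals $\mu^{-1}(\widetilde M_{2r})$, its image on $\ov D$ satisfies $\nu^{*}[\NL_{2r}]_{\D}=[\NL_{2r}]_{\ov D}=\mu^{*}[M_{2r}]$. Transferring $\alpha\in H_{4}(\D)$ up the $|G|$-sheeted cover $\nu$ and applying the projection formula for the proper map $\mu$ yields
$$\alpha\cap[\NL_{2r}]_{\D}\;=\;\frac{1}{|G|}\,\bigl(\mu_{*}\nu^{*}\alpha\bigr)\cap[M_{2r}]_{M}.$$
Intersection theory on smooth DM stacks \cite{gil,vist} legitimizes the cap products, \'etale transfer, and projection formula across $H_{*}(\D)$, $H_{*}(\ov D)$, and $H_{*}(M)$.

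Setting $\alpha':=\mu_{*}\nu^{*}\alpha\in H_{4}(M)$, the hypotheses of Theorem \ref{km} are in place: $M$ is locally symmetric of orthogonal type with $(p,q)=(4,28)$, $\Gamma$ is torsion-free, and $\Lambda\simeq E_{8}(-1)^{3}\oplus H^{4}$ is even and unimodular of the required signature. The theorem then implies that
$$\alpha'\cap\epsilon_{4}+\sum_{n=1}^{\infty}e^{\pi in\tau}\bigl(\alpha'\cap[M_{n}]\bigr)$$
is modular of weight $(p+q)/2=16$. Because $\Lambda$ is even, only $n=2r$ contributes, and $e^{\pi i(2r)\tau}=q^{r}$; dividing through by $|G|$ and identifying $c_{0}=(\alpha'\cap\epsilon_{4})/|G|$ recovers $\varphi(q)$. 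Unimodularity of $\Lambda$ produces the level $SL(2,\Z)$.

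The main technical obstacle is the bookkeeping across three distinct geometric objects: $\D$ is a smooth DM stack rather than a manifold, $\mu$ is a smooth but non-holomorphic sphere fibration, and Kudla-Millson fix specific orientations on the $\widetilde M_{\ell}$ that must survive both the \'etale transfer along $\nu$ and the pushforward along $\mu$. The earlier remark on $\Gamma(3)$-conjugacy of $\pm v$ and the multiplicity $|G|$ from the Galois cover are exactly the kind of signs and factors that must be tracked carefully. Once orientations, multiplicities, and the dimension shift along the sphere bundle are absorbed correctly, the conclusion follows immediately from Theorem \ref{km}.
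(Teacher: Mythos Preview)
Your proposal is correct and follows essentially the same route as the paper: both arguments transfer the intersection from $\D$ to $M$ via the diagram $\D\xleftarrow{\nu}\ov D\xrightarrow{\mu}M$, apply push-pull to obtain $\alpha\cap[\NL_{2r}]_\D=\frac{1}{|G|}\,(\mu_*\nu^*\alpha)\cap[M_{2r}]$, and then invoke Theorem~\ref{km} with $(p,q)=(4,28)$ to get weight $16$. You are in fact slightly more explicit than the paper about why only even $n$ appear (evenness of $\Lambda$) and why the level is $SL(2,\Z)$ (unimodularity), but the underlying strategy is identical.
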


\section{Stacks}
\noindent
The period domain $D= O(\Lambda) \bs \widetilde{D}$ is singular, since there is torsion in the arithmetic group $O(\Lambda)$.  Since intersection theory on singular spaces is ill-behaved, we introduced the (smooth) stack quotient
$$\D := [O(\Lambda) \bs \widetilde{D} ].$$
The period map $j: \P^{2*}\to D$ does {\it not} lift to a map from $\P^{2*}\to \D$.  In order to get a lift, we need to define a DM stack $\PP$ over $\P^2$, which is an isomorphism away from $\Delta^*$.  There is a well-defined period map of stacks $j:\PP\to\D$, an immersion whose image is a closed substack of $\D$, with cycle class $\alpha\in H_4(\D)$.  This lifts the global period map of Proposition ~\ref{pdmap}, namely we have a commutative square
$$
\xymatrix{
\PP \ar[r] \ar[d] & \D \ar[d] \\
\P^{2*} \ar[r] & D.
}
$$
In this section, we flesh out this construction, and explain how to compute intersection numbers with Noether-Lefschetz classes on the quotient stack $\D$.  By a result of Artin \cite{artinsimul}, it is always possible to simultaneously resolve a family of surfaces with at worst ADE singularities, after finite base change in the category of algebraic spaces.  We prefer this stacky approach because it is more explicit, and allows us to work locally. \\\\
Recall that morphisms to a quotient stack are characterized by principal bundles.  Namely, if $Y$ is an analytic space, a morphism $Y\to [O(\Lambda)\bs \widetilde{D}]$ is the same as 
$$
\xymatrix{
E \ar[r]\ar[d] & \widetilde{D}\\
Y &
}
$$
where $E\to Y$ is an $O(\Lambda)$-principal bundle, and $E\to \widetilde{D}$ is $O(\Lambda)$-equivariant.  If $\L$ is a local system on $Y$ with fiber $\Lambda$, equipped with a variation of Hodge structure, we get a morphism to $[O(\Lambda)\bs \widetilde{D}]$ as follows.  Take $E$ to be the sheaf of isomorphisms from $\L$ to $\ul{\Lambda}$, the constant sheaf, and define the equivariant map by sending $(y,\phi)$ to the Hodge structure on $\Lambda$ induced by $\phi:\L(y)\os{\sim}\to \Lambda$.  This characterization can be extended to the case where $Y$ is itself a DM stack, using descent theory.\\\\
The construction of $\PP$ is designed to introduce stacky structure on $\P^{2*}$ in accordance with the monodromy described in the proof of Proposition \ref{pdmap}.  The advantage is that the local system $\L_U \subset R^2q_{U*}(\ul{\Z})$ over $U\subset \P^{2*}$ extends to all of $\PP$, and admits a variation of Hodge structure, so we get a period map $\PP\to \D$.\\\\
{\bf Construction of $\PP$}.  First, take the double over $T\to \P^{2*}$ branched along $\Delta^*$.  If $F_{\Delta^*}\in H^0(\P^{2*},\O(612))$ is the form cutting out $\Delta^*$, we can set
$$T = Z(u^2 = F_{\Delta^*})\subset \text{Tot}(\O_{\P^{2*}}(612)),$$
where $u$ is the fiber coordinate of the line bundle $\O_{\P^{2*}}(311)$.  This new surface will have isolated singularities over the singularities of $\Delta^*$.  Over each node of $\Delta^*$, $T$ has an $A_1$ singularity, and over each cusp of $\Delta^*$, $T$ has an $A_2$ singularity.  This follows immediately from the local equations:
\begin{align*}
A_1 &:=Z(u^2 = x^2+y^2)\subset \C^3\\
A_2 &:=Z(u^2 = x^3+y^2)\subset \C^3.
\end{align*}
On the other hand, recall that the $A_n$ singularities are also cyclic quotients of $\C^2$:
\begin{align*}
A_1 &\simeq \C^2/\Z_2\\
A_2 &\simeq \C^2/\Z_3.
\end{align*}
Following \cite{gil}, we can construct a stack $\mathfrak T$ whose coarse space is $T$, with the coarse map given \'{e}tale locally by the usual
\begin{align*}
[\C^2/\Z_2]&\to \C^2/\Z_2\\
[\C^2/\Z_3]&\to \C^2/\Z_3.
\end{align*}
This is done by first covering $T$ by \'{e}tale neighborhoods $U_\alpha$ which are isomorphic to cyclic quotients $V_\alpha/G_\alpha$.  Let $V$ be the disjoint union of the $V_\alpha$'s, and form the groupoid scheme
$$ V\us{T}\times V \rightrightarrows V.$$
The structure maps are both \'{e}tale, so the associated stack $\T$ is DM, and the fact that its coarse space is $T$ can be checked \'{e}tale locally.  The involution $T\to T$ coming from the double cover extends to $\T$ by Lemma \ref{romagny} below, as long as we choose the $U_\alpha$ to be preserved by the involution.  Finally, we define $\PP := [\T/\Z_2]$.
\begin{lemma}\label{romagny}
If $X$ admits a $G$-action, and $H \trianglelefteq G$ is a normal subgroup, then the stack $[X/H]$ has an action of $G/H$ such that 
$$[[X/H]/(G/H)]\simeq [X/G]$$
\end{lemma}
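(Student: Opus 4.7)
The plan is to prove the equivalence by checking the universal property of both sides: produce a natural equivalence between the groupoids of maps $Y \to [[X/H]/(G/H)]$ and $Y \to [X/G]$ for an arbitrary test object $Y$, using the description of maps into a quotient stack via equivariant principal bundles.

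First I would construct the $G/H$-action on $[X/H]$. Since $H$ is normal, for any $g \in G$ the map $g \cdot : X \to X$ is equivariant with respect to the conjugation automorphism $\mathrm{Ad}_g : H \to H$, so it descends to an automorphism $\tilde{g}$ of $[X/H]$. For $h \in H$, the automorphism $\tilde{h}$ of $[X/H]$ is $2$-isomorphic to the identity: the $2$-isomorphism is provided by the tautological $H$-equivariant map $X \xrightarrow{\mathrm{id}} X$ precomposed with $h$. This gives a strict $G$-action on $[X/H]$ together with a trivialization on $H$, i.e.\ a $G/H$-action (in the appropriate $2$-categorical sense).

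Next I would establish the equivalence on $Y$-points. Unpacking, a morphism $Y \to [X/G]$ is a $G$-torsor $P \to Y$ together with a $G$-equivariant map $P \to X$. On the other hand, a morphism $Y \to [[X/H]/(G/H)]$ consists of a $G/H$-torsor $Q \to Y$, an $H$-torsor $R \to Q$, and an $H$-equivariant map $R \to X$, with compatible $G/H$-equivariance data for the descent. The forward functor sends $(P \to Y, P \to X)$ to $(P/H \to Y, P \to P/H, P \to X)$: the quotient $P/H$ is a $G/H$-torsor, the projection $P \to P/H$ is an $H$-torsor, and the map $P \to X$ remains equivariant. The inverse functor reconstructs $P$ as the total space of the tower $R \to Q \to Y$, which carries a canonical $G$-torsor structure because the group extension $1 \to H \to G \to G/H \to 1$ assembles the pointwise stabilizers into a $G$-action; the $H$-equivariant map $R \to X$ promotes to a $G$-equivariant map by the descent data.

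The main obstacle I expect is bookkeeping the $2$-morphisms carefully: one must check that the two functors above are naturally isomorphic to the identity in both directions, and that the coherence data coming from the $G/H$-equivariance of $Q \to [X/H]$ matches the coherence data inherent in a $G$-torsor. Concretely, the $G/H$-equivariance is encoded by an isomorphism of $H$-torsors on the total space of the action map, and one has to verify that this isomorphism is precisely the datum needed to extend the local $G$-action on $R$ to a global one. Once this compatibility is established, the remaining verifications (torsor axioms, equivariance of maps to $X$) are formal, and the equivalence of stacks follows by descent since the comparison is natural in $Y$.
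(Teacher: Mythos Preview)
Your proposal outlines a correct direct argument via the functor of points, and you have correctly identified the one nontrivial step: verifying that the $G/H$-equivariance datum on the map $Q\to [X/H]$ is exactly what is needed to promote the composite $R\to Q\to Y$ from an $H$-torsor over a $G/H$-torsor to a genuine $G$-torsor. This is the content of the cocycle condition for the equivariance $2$-isomorphism, and once it is checked the rest is formal.

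The paper, however, does not give such an argument at all: its ``proof'' is simply a citation to Remark~3.4 of Romagny's paper on group actions on stacks, together with the observation that for the application only the two concrete cases $\Z_3\trianglelefteq S_3$ and $\Z_2\trianglelefteq \Z_2\times\Z_2$ are required. So your approach is genuinely different in that you supply an actual argument, whereas the paper defers entirely to the literature. Your route is more self-contained and makes the mechanism of the equivalence transparent, at the cost of the $2$-categorical bookkeeping you flag; the paper's route is more economical and, since only two small finite-group instances are ever used, arguably adequate for the purpose.
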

\begin{proof}
See Remark 3.4 of \cite{romagny}.  Note that we only need the cases $\Z_3 \trianglelefteq S_3$ and $\Z_2 \trianglelefteq \Z_2\times\Z_2$.
\end{proof}
\noindent
The local system $\L$ on $\P^{2*}-\Delta^*$ extends over all of $\PP$ by analytic local picture, where $\PP$ is a quotient stack of a complex ball $B\subset \C^2$.  A local system on the quotient $[B/G]$ is the same as a local system on $B$, which must be trivial, together with an action of $G$ on the fiber (the local monodromy action).\\\\
To compute the intersection numbers, we spread out the calculation over $\PP$.  Given a local system $\L$ on $Y$ underlying a variation of Hodge structure, we can associate to it a fiber bundle $\widetilde{D}(\L)\to Y$ with fiber $\widetilde{D}$, and a canonical section $s$ of this bundle which picks out the Hodge structure on $\L(y)$.  To construct this family, note that $O(\Lambda)$ acts on $\widetilde{D}$, so we just need to adapt the principal bundle to fiber bundle construction to the context of stacks:
$$\widetilde{D}(\L) := \widetilde{D} \times_{O(\Lambda)} E = [(\widetilde{D} \times E)/O(\Lambda)]\to \D$$
which can also be done by descent.  As for the section, we need a morphism from $\PP$ to the above stack quotient, so take the principal bundle $E$ itself, and the graph of the map $E\to \widetilde{D}$.  Now the diagram is
$$
\xymatrix{
\widetilde{D}(\L) \ar[d] \ar[rd] & \\
\PP \ar@/^/[u]^s \ar[r] & \D
}
$$
The Noether-Lefschetz loci can also be spread out to $\widetilde{\NL}_{2r}(\L)\subset \widetilde{D}(\L)$ over $\PP$, but note that there are now infinitely many components in every fiber of $\widetilde{D}(\L)$.  To remedy this, we replace $\widetilde{D}$ with the smooth arithmetic quotient $\ov{D}=\Gamma \bs \widetilde{D}$ from Section 4 to form the fiber bundle:
$$\ov{D}(\L) := \ov{D} \times_{O(\Lambda)} E = [(\ov{D} \times E)/O(\Lambda)]\to \D$$
Now there are finitely many Noether-Lefschetz components in each fiber, and they are permuted by the monodromy action, which factors through $G = O(\Lambda)/\Gamma$.  By Lemma 2.1 of \cite{km}, each component of $\ov{\NL}_{2r}\subset \ov{D}$ is smooth (at least after normalization, to remove the normal crossings).  We have a local embedding
 $$\ov{\NL}_{2r}(\L)^{norm} \to \ov{D}(\L)$$
 since $\ov{\NL}_{2r}$ has only normal crossing singularities.\\\\
The desired cohomological intersection numbers are 
$$\int_{\ov{D}(\L)} [s(\PP)] \cap \ov{\NL}_{2r}(\L), $$
where $s$ is the section $\PP \to \ov{D}(\L)$.\\\\
Using Vistoli's formalism \cite{vist}, the Gysin map associated to the local regular embedding $f:\ov{\NL}_{2r}(\L)^{norm} \to \ov{D}(\L)$ is given in terms of the Cartesian square
$$\xymatrix{
W \ar[r] \ar[d]_g & \PP \ar[d]^s \\
\ov{\NL}_{2r}(\L)^{norm} \ar[r]_f & \ov{D}(\L)
}$$
Let $N= g^* N_f$, containing the normal cone $C_{W/\PP}$.  The intersection number is given by
$$f^![\PP] := 0^!_N [C_{W/\PP}].$$\\\\

\section{Gromov-Witten Theory}
\noindent
The moduli space of stable maps $\ov{M}_0(X,\ell+nf)$ can be partitioned into 
$$\ov{M}_0(X,\ell+nf) = W_{n,0} \sqcup W_{n,1}\sqcup \dots \sqcup W_{n,n},$$
where $W_{n,i}$ consists of stable maps $u:C\to X$ whose image consists of a smooth horizontal component of class $\ell+if\in H_2(X)$, glued with $(n-i)$ vertical components, which must be singular fibers, since the geometric genus is 0).
\begin{prop}
Each locus $W_{n,i}$ is both closed and open inside $\ov{M}_0$.
\end{prop}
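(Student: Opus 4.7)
My plan is to exhibit a locally constant $\Z$-valued invariant on $\ov{M}_0(X,\ell+nf)$ whose level sets are precisely the strata $W_{n,i}$; then clopen-ness of each $W_{n,i}$ follows automatically from the disjoint decomposition and the fact that each stratum is open.

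\textbf{Step 1: extracting $i$.} For a stable map $[u:C\to X]$ with $u_*[C]=\ell+nf$, the composite $\pi\circ u:C\to\P^2$ has image a line $L$ and total degree one onto $L$. Thus exactly one component $C_0\subset C$ dominates $L$, and it does so isomorphically; every other component is contracted by $\pi$ into a fiber. The restriction $u|_{C_0}$ factors through $S=\pi^{-1}(L)$ as a section of $\pi|_S$, and its image $\sigma:=u(C_0)\subset S$ is a smooth rational section whose class is $\ell+if\in H_2(X,\Z)$ for a unique integer $0\le i\le n$. I set $\phi([u]):=f\cdot[u(C_0)]$; by construction $W_{n,i}=\phi^{-1}(i)$ and the $W_{n,i}$ partition $\ov{M}_0(X,\ell+nf)$.

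\textbf{Step 2: local constancy.} I would then show that $\phi$ is locally constant. Given a connected family $u_t:C_t\to X$ of stable maps, the horizontal image class $[\sigma_t]\in H_2(X,\Z)$ is a continuous $\Z$-valued function of $t$, hence constant, so $\phi$ is constant on the base. Each $W_{n,i}=\phi^{-1}(i)$ is therefore open; being the complement of the union of the other open strata, it is also closed.

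\textbf{Main obstacle.} The delicate point is to verify that $[\sigma_t]$ really does vary continuously across the entire moduli space, in particular at boundary points where $C_t$ may acquire additional nodes. The worry is a specialization in which the smooth horizontal component $C_{0,t}\os{\sim}\to\sigma_t\subset S_t$ splits at $t=0$ into a smaller section $C'_0$ plus an extra fiber-class component $F$, dropping the horizontal class from $\ell+if$ to $\ell+jf$ with $j<i$. I would rule this out using the rigidity forced by the section condition: $u|_{C_0}$ is by construction a morphism from a smooth $\P^1$ onto an irreducible section, and in any family the component $C_{0,t}$ is pinned as the unique degree-one preimage of $L_t$ under $\pi\circ u_t$, so it remains a smooth $\P^1$ throughout. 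Hence $\sigma_t$ cannot degenerate into a reducible section-plus-fiber configuration without violating the factorization $C_{0,t}\to L_t\to S_t$. Combining this rigidity with the continuity of homology classes gives the desired local constancy of $\phi$ and finishes the proof.
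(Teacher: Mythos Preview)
Your approach works but differs from the paper's in its choice of invariant. The paper uses a numerical invariant of the \emph{full} image cycle $u_*[C]$ inside the surface $S=\pi^{-1}(L)$: by adjunction its arithmetic genus there is $n-i$, and this is constant under specialization because $u_*[C_t]$ arises from the flat family $\mathcal C\to\S_T$ over $T$, so its class in the local system $t\mapsto H_2(S_t,\Z)$ is locally constant. You instead extract the horizontal component $C_0$ and track $[\sigma_t]\in H_2(X,\Z)$ directly. The paper's route has the advantage that the invariant is attached to the whole stable map, so no component extraction in families is required.

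Two small points on your version. First, $f\cdot[u(C_0)]$ is not well-defined as written, since $H_2$ of a threefold carries no self-pairing; you mean the $f$-coefficient of $[u(C_0)]$ in the basis $\{\ell,f\}$. Second, your handling of the main obstacle stops just short. You correctly observe that each $C_{0,t}$ is a smooth $\P^1$ and each $\sigma_t$ an irreducible section, but this by itself does not yet give continuity of $[\sigma_t]\in H_2(X,\Z)$: one must still check that the horizontal components assemble into an honest family over the base, not merely a fiberwise collection. A clean fix: the degree-one map $\pi\circ u:\mathcal C\to\LL_T$ has a rational inverse, which extends (since $\LL_T$ is smooth and $\mathcal C$ proper over $T$) to a section $\LL_T\to\mathcal C$; composing with $u$ yields a morphism $\LL_T\to X$ over $T$ whose fiberwise homology class is then automatically constant. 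With that sentence added, your argument is complete.
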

\begin{proof}
This follows from the fact that arithmetic genus of the image curve is preserved under degeneration.  When $u\in W_{n,i}$, the self-intersection of $u(C)$ inside the elliptic surface $\pi^{-1}(\pi(C))$ is $-3+n-i$.\end{proof}
\noindent
\begin{lemma}\label{shift3}  The inclusion map $\iota:S \to X$ induces a homomorphism $\iota_*:H_2(S)\to H_2(X)$.  If $\sigma$ is the class of a section, then
$$\iota_*(\sigma)= \ell + (\sigma\cdot z+3)f.$$
\end{lemma}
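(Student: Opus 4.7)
My plan is to write $\iota_*\sigma = a\ell + bf$ in the given basis $H_2(X) = \Z\ell\oplus\Z f$, and then pin down the integers $a$ and $b$ by pairing $\iota_*\sigma$ against two divisor classes on $X$ that are dual to $\ell$ and $f$ under intersection.

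First, to determine $a$, I would intersect with $\pi^*H$, where $H$ is a hyperplane on $\P^2$. Since $f$ lies in a fiber we have $\pi^*H\cdot f = 0$, while $\pi^*H\cdot \ell = 1$, so $\pi^*H\cdot\iota_*\sigma = a$. By the projection formula along the closed immersion $\iota$, this equals $(\pi|_S)^*H\cdot\sigma$ computed on $S$. The pullback $(\pi|_S)^*H$ is a single elliptic fiber of $S\to L$, and any section meets a fiber transversely in one point, giving $a=1$.

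To determine $b$, I would pair with the divisor $D\subset X$ given by the image of the zero section $z:\P^2\to X$. Scheme-theoretically $D\cap S$ is exactly the zero section of $S\to L$, so the projection formula identifies $D\cdot\iota_*\sigma$ with the intrinsic intersection $z\cdot\sigma$ on $S$. On the other hand, $D\cdot f = 1$ because the zero section meets each fiber transversely at the origin of the elliptic curve. For $D\cdot\ell$, I would invoke the Calabi-Yau condition $K_X = 0$: adjunction on the smooth divisor $D\simeq\P^2$ yields $D|_D = K_D = \O_{\P^2}(-3)$, and since $\ell\subset D$ is a line in $D$, we get $\ell\cdot_X D = \ell\cdot_D D|_D = -3$. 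Combining, $-3a + b = z\cdot\sigma$, and plugging in $a=1$ gives $b = z\cdot\sigma + 3$, as claimed.

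The only substantive input is the Calabi-Yau adjunction step identifying $D|_D$ with $\O_{\P^2}(-3)$; the remainder is straightforward bookkeeping in the rank-two lattice $H_2(X)$, so I expect no real obstacle.
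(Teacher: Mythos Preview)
Your argument is correct and is precisely the ``direct calculation'' the paper alludes to: the paper's proof simply cites the fact that $z(\P^2)\subset X$ has normal bundle $\O(-3)$, which is exactly your adjunction computation $D|_D = K_D = \O_{\P^2}(-3)$ using $K_X=0$. The remaining steps (pairing with $\pi^*H$ and with $D$ via the projection formula) are the expected way to unpack that calculation, so your approach coincides with the paper's.
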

\begin{proof}
This is a direct calculation using the fact that the zero section $z(\P^2)\subset X$ has normal bundle $\O(-3)$.
\end{proof}
\begin{cor}\label{empty}
$W_{n,1}$ and $W_{n,2}$ are empty, for all $n$.
\end{cor}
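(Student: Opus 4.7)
The plan is to extract the obstruction directly from Lemma \ref{shift3}. Suppose $[u:C\to X]\in W_{n,i}$, and let $C_0\subset C$ be the distinguished horizontal component. Its image is a section of $\pi|_S:S\to L$ for $S=\pi^{-1}(L)$ and $L=\pi(u(C_0))$, so I would write $\sigma\in H_2(S)$ for the class $[u(C_0)]$. By hypothesis $\iota_*(\sigma)=\ell+if$ in $H_2(X)$, and Lemma \ref{shift3} identifies this pushforward with $\ell+(\sigma\cdot z+3)f$. Comparing coefficients of $f$ forces
$$\sigma\cdot z \;=\; i-3.$$

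The next step is to bound $\sigma\cdot z$ from below using the fact that $\sigma$ and $z|_L$ are both irreducible section curves on $S$. If $\sigma=[z|_L]$, then $\sigma\cdot z=-3$, recovering the case $i=0$. Otherwise $\sigma$ and $z|_L$ are distinct irreducible curves, so their intersection number is non-negative, giving $i\geq 3$. Combining these two cases, $i\in\{0\}\cup\mathbb{Z}_{\geq 3}$, and in particular $i\neq 1,2$, which yields $W_{n,1}=W_{n,2}=\emptyset$.

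The only wrinkle I anticipate is the degenerate case in which $L\in\Delta^*$ and $S$ acquires $A_n$ singularities. The cleanest fix is to pass to the minimal resolution $\widetilde{S}$: section curves still have self-intersection $-3$ (the exceptional components are vertical, contributing to $A_0(\widetilde{S})$ rather than to horizontal classes), and the non-negativity of the intersection of two distinct irreducible curves is preserved. The class identity $\iota_*\sigma=\ell+(\sigma\cdot z+3)f$ pulls back through the resolution without change because the exceptional curves are contracted by $\iota$, so the same numerical dichotomy applies, and the conclusion follows uniformly.
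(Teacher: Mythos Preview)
Your proposal is correct and matches the paper's intended argument: the corollary is stated immediately after Lemma~\ref{shift3} with no separate proof, precisely because the identity $\iota_*(\sigma)=\ell+(\sigma\cdot z+3)f$ forces $\sigma\cdot z=i-3$, and the dichotomy $\sigma=z|_L$ (giving $i=0$) versus $\sigma\neq z|_L$ distinct irreducible sections (giving $\sigma\cdot z\geq 0$, hence $i\geq 3$) is exactly what you wrote. Your handling of the singular case via the crepant resolution is a sensible elaboration of a point the paper leaves tacit.
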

\noindent
The Gromov-Witten invariant breaks up into a sum:
$$N^X_{0,\ell+nf} = \sum_{i=0}^n \int_{[W_{n,i}]^{vir}} 1.$$
By a dimension count, one expects $W_{n,n}$ to be a finite set when $n>0$:
$$h(n):=\#\{ L\subset \P^2: \,\, \exists \sigma\in\MW(\pi^{-1}(L)), \iota_*(\sigma)=\ell+nf  \}<\infty.$$
This is the case when $X$ is sufficiently general (see Proposition \ref{generic}).  The numbers $h(n)$ control ``most'' of the Gromov-Witten invariant.  For $0<i<n$, $W_{n,i}$ fibers over $W_{i,i}$ (by dropping the vertical components), and the contribution of each fiber is given by the formula of Bryan-Leung:
$$\int_{[W_{n,i}]^{vir}} 1 = h(i) [\eta(q)^{-36}]_{n-i}.$$
The final piece $W_{n,0}$ is harder to compute, since the horizontal component may vary over lines in the zero section $z(\P^2)\subset X$.  We will address this in a future paper.
\begin{prop}\label{generic}
If the forms $a_{12}$ and $b_{18}$ in the definition of $X$ are very general, then $W_{n,n}$ is a finite discrete set.\end{prop}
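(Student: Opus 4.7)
The plan is to combine a Noether-Lefschetz count for candidate lines with a Weierstrass dimension count. A point of $W_{n,n}$ is a pair $(L,\sigma)$, where $L\subset\P^2$ is a line and $\sigma\subset\pi^{-1}(L)$ is a section with $\sigma\cdot z=n-3$, by Lemma~\ref{shift3}. The projection $\Pi\sigma\in A(\pi^{-1}(L))^\perp\cong\MW(\pi^{-1}(L))$ has self-intersection $-2n$, so the underlying line satisfies $j(L)\in\NL_{2n}\subset D$. By Borel's theorem the $O(\Lambda)$-orbits of lattice vectors of norm $-2n$ are finite in number, so $\NL_{2n}$ has finitely many irreducible components in $D$, each of complex codimension $2$; since $j:\P^{2*}\to D$ is an immersion (the lemma preceding Proposition~\ref{pdmap}) and $\P^{2*}$ is compact of complex dimension $2$, a transversality argument will give that for very general $(a_{12},b_{18})$ the image $j(\P^{2*})$ meets each component in a $0$-dimensional, hence finite, set.

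The transversality step I expect to be the main obstacle, and I would justify it by a dimension count in the Weierstrass moduli. Form the universal incidence
\[
\mathcal I=\{(a,b,L,\sigma):\sigma\subset\pi^{-1}(L)\text{ a section with }\sigma\cdot z=n-3\}
\]
over the parameter space $\mathcal P=H^0(\P^2,\O(12))\oplus H^0(\P^2,\O(18))$. A section of the Weierstrass $E(3)$ over $L\simeq\P^1$ with $\sigma\cdot z=n-3$ is presented by polynomials $(X(t),Y(t),Z(t))$ of prescribed degrees satisfying the Weierstrass equation, and a direct parameter count shows that this cuts $\mathcal I$ out with the expected codimension, in the spirit of Miranda's treatment of Weierstrass families in \cite{mirmoduli}. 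The key observation is that the restriction map $\mathcal P\to H^0(L,\O(12))\oplus H^0(L,\O(18))$ is surjective for each fixed $L$, so the combined Kodaira-Spencer map from $T\mathcal P\oplus T_L\P^{2*}$ into $H^1(T_{\pi^{-1}(L)})$ is surjective. This forces the projection $\mathcal I\to\mathcal P$ to be dominant with generic $0$-dimensional fiber, and excluding the finitely many exceptional loci in $\mathcal P$ yields the very-general condition.

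Finally, for each of the finitely many lines produced above, the number of sections is itself finite: $\MW(\pi^{-1}(L))$ is finitely generated and torsion-free (Appendix~A), the height pairing is positive-definite on $\MW\otimes\Q$, and the height is pinned down by $\sigma\cdot z=n-3$, leaving only finitely many lattice vectors. By the bijection in Section~2 between $\MW$ vectors and section curves, this concludes the finiteness of $W_{n,n}$.
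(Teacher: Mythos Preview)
Your first paragraph contains a genuine gap. The locus $\NL_{2n}$ records \emph{any} polarized lattice vector of norm $-2n$, not only projections of section classes; in particular, exceptional curves from resolved singular fibers contribute. Every line $L\in\Delta^*$ gives a surface with an $A_1$ singularity and hence a $(-2)$-class $e$, so the extended period map sends the entire curve $\Delta^*$ into $\NL_2$, and more generally $j(\Delta^*)\subset\NL_{2k^2}$ via the multiples $ke$. No genericity assumption on $(a_{12},b_{18})$ can make $j(\P^{2*})\cap\NL_{2n}$ zero-dimensional for such $n$; the intersection always contains a curve. The period domain does not distinguish Mordell-Weil classes from exceptional ones (for the indefinite even unimodular $\Lambda$ here, primitive vectors of a given norm lie in a single $O(\Lambda)$-orbit), so ``transversality to $\NL_{2n}$ in $D$'' is the wrong target. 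This is precisely why the later computation of $\varphi(q)$ requires theta-series corrections subtracting the $\Delta^*$ contribution, rather than reading $h(n)$ directly off the Noether-Lefschetz intersection numbers.

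Your second paragraph is on the right track and is essentially what the paper does, but the two steps you wave past are where the content lies. The claim that the section locus has the expected codimension is not a routine parameter count: the paper passes to Miranda's algebraic GIT moduli space $\W_3$ of Weierstrass $E(3)$ surfaces and invokes Cox's theorem that the Mordell-Weil loci are reduced of codimension $2$ in $\W_3\setminus D_r$, together with a separate lemma (there exist singular surfaces in the reducible-fiber divisor $D_r$ with trivial Mordell-Weil group, produced by base-changing an extremal rational elliptic surface) to prevent the Mordell-Weil locus from containing $D_r$. The transversality of $w_{a,b}(\P^{2*})$ to these codimension-$2$ loci is then obtained from an explicit Bertini-type lemma for ``freely movable'' families of immersions; your observation that the restriction $\mathcal P\to H^0(L,\O(12))\oplus H^0(L,\O(18))$ is surjective is exactly the free-movability hypothesis. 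Your third paragraph, bounding the number of sections over each such line via positive-definiteness of the height pairing, is correct and fills in a point the paper leaves implicit.
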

\begin{proof}
To get the desired transversality, we introduce the (algebraic) moduli space $\W_3$ of Weierstrass fibrations over $\P^1$ constructed in \cite{mirmoduli}.  Using Mumford's geometric invariant theory (GIT), one can form a geometric quotient of 
$$V=H^0(\P^1,\O(12))\oplus H^0(\P^1,\O(18))$$
by the natural action of $\C^*\times SL_2(\C)$, after removing the unstable locus.  A point $(\alpha,\beta)\in V^*$ is GIT-stable if and only if $\alpha$ (resp. $\beta$) never vanishes with multiplicity $\geq 6$ (resp. $\geq 9$).  If the forms $a_{12}$ and $b_{18}$ are general, then the associated plane curves do not have high order tangents, so the family $q:\S\to \P^{2*}$ induces a map to moduli 
$$w_{a,b}:\P^{2*}\to \W_3.$$
The moduli space $\W_3$ has dimension 28 and contains two notable irreducible divisors, $D_c$ and $D_r$.  The generic member of $D_c$ is regular with a cuspidal fiber, and the generic member of $D_r$ is singular, with minimal resolution of Kodaira type $I_2$.  If the curves defined by $a_{12}$ and $b_{18}$ meet transversely, then
\begin{align*}
w_{a,b}^{-1}(D_c)&=\bigcup_{i=1}^{216} c_i^*\\
w_{a,b}^{-1}(D_r)&=\Delta^*
\end{align*}
where $c_i$ are the cusps of $\Delta$.  In \cite{cox}, Cox proves that the Mordell-Weil loci are all reduced of codimension 2 in $\W_3-D_r$.  By Lemma \ref{starr} below, $D_r$ must contain surfaces with trivial Mordell-Weil group, so in fact the Mordell-Weil loci are codimension 2 everywhere.  By a Bertini argument detailed in Lemma \ref{bertini}, very general choices of $(a_{12},b_{18})$ will produce a moduli map $w_{a,b}$ transverse to each of these loci.  However, the intersection of $w_{a,b}(\P^{2*})$ with the Mordell-Weil locus could occur within $D_r$, where the surface in question is singular.
\end{proof}
\begin{lemma} \label{starr}
There exist surfaces in $D_r$ with trivial Mordell-Weil group.
\end{lemma}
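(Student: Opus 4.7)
The plan is to exhibit a very general element of $D_r$ with trivial Mordell-Weil group, by combining the Shioda-Tate sequence of Section 2 with a Noether-Lefschetz density argument on $D_r$ itself.

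For any $S \in D_r$, the minimal resolution $\widetilde{S}$ is a smooth elliptic surface over $\P^1$ which, generically in $D_r$, has exactly one $I_2$ fiber and $34$ fibers of type $I_1$ (checked via the Euler characteristic identity $e(\widetilde S) = 36$). The exceptional $(-2)$-curve $E$ from the $A_1$ singularity of $S$ contributes a class independent of $f$ and $z$, so $\rho(\widetilde{S}) \geq 3$ identically on $D_r$. The Shioda-Tate formula then reads
$$\rho(\widetilde{S}) \;=\; 2 + \sum_v (m_v - 1) + \text{rank}\,\MW(\widetilde{S}) \;=\; 3 + \text{rank}\,\MW(\widetilde{S}),$$
so it suffices to produce a single point of $D_r$ where $\rho(\widetilde{S}) = 3$; torsion-freeness of $\MW$ is then automatic by Appendix A.

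The next step is to argue that the locus $\{\rho(\widetilde{S}) \geq 4\} \cap D_r$ is a countable union of proper closed analytic subvarieties of $D_r$, hence has nonempty complement. This is the standard Noether-Lefschetz density statement applied to the VHS on $D_r$ obtained by resolving the family $\S \to \P^{2*}$ transversely along $\Delta^*$, exactly as in the proof of Proposition \ref{pdmap}. The input required is that the restricted period map $D_r \to \widetilde{\NL}_2 / \Gamma$ is not contained in any further Noether-Lefschetz component of codimension $\geq 2$ inside $\widetilde{\NL}_2$; I would verify this by a Kodaira-Spencer/tangent space calculation analogous to the immersion argument already carried out for $j$, now performed at a general line tangent to $\Delta$.

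The hard part will be this non-degeneracy check, since jumping from $\rho = 3$ to $\rho \geq 4$ on $D_r$ must be shown to be a genuine codimension condition rather than an identity. A cleaner alternative, which I would keep in reserve, is to give an explicit example: choose $a_{12}$ and $b_{18}$ so that $4a_{12}^3 + 27 b_{18}^2$ restricted to a chosen line has exactly one double root and $34$ simple roots, and verify $\MW = 0$ on the resolution directly by bounding the degrees of candidate rational sections $(x(t), y(t))$ satisfying the Weierstrass equation. Either route establishes the existence claim.
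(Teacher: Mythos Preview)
Your Shioda--Tate reduction to $\rho(\widetilde S)=3$ is fine, but the Noether--Lefschetz density argument you sketch defers the entire content of the lemma to the step you yourself flag as hard. Saying that the restricted period map on $D_r$ does not land in a deeper NL stratum is the same as saying that the generic resolved surface over $D_r$ carries no algebraic class beyond $f,z,E$; since extra reducible fibers are excluded generically, this \emph{is} the assertion that $\MW=0$ generically on $D_r$. An infinitesimal Kodaira--Spencer verification could in principle supply this, but it is not an adaptation of the immersion lemma for $j$: there the issue was injectivity of a map out of a $2$-dimensional base, whereas here you must show that cupping with every primitive class $\beta\notin\langle f,z,E\rangle$ is nonzero on the $27$-dimensional tangent space to $D_r$ --- essentially Cox's theorem redone on the boundary divisor, which is exactly what the lemma is meant to let you avoid. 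Your fallback of an explicit example with a direct bound on section degrees is feasible in principle but is a genuine computation, not a remark. (A minor point: the family $\S\to\P^{2*}$ and the curve $\Delta^*$ live over a fixed $X$, while $D_r$ is a divisor in the $28$-dimensional moduli space $\W_3$; you want the universal family over $D_r$, not $\S$.)

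The paper avoids all of this with a short constructive argument. Take an extremal rational elliptic surface $R\to\P^1$ with reducible fibers and $\MW(R)=0$ (these are classified by Miranda). Pulling $R$ back along degree~$3$ covers $f:\P^1\to\P^1$ produces a family $\{S_f\}$ of $E(3)$ surfaces lying in $D_r$. If every $S_f$ had a nonzero section, pushing forward along $S_f\to R$ would give a family of multisections of $R$ parametrized by $\ov M_0(\P^1,3)$; at the boundary this degenerates to a comb whose handle is a section of $R$, contradicting $\MW(R)=0$. No period-domain analysis is needed, and the known input (Miranda's list) replaces the transversality computation entirely.
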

\begin{proof}
Fix an extremal rational elliptic surface $R\to\P^1$, with reducible fibers and trivial Mordell-Weil group (these were classified by Miranda).  For each degree 3 cover $f:\P^1\to \P^1$, we obtain an $E(3)$ elliptic surface $S_f\in D_r$ via base change.  
$$\xymatrix{
S_f \ar[r]^{f'}\ar[d] & R \ar[d]\\
\P^1 \ar[r]^f & \P^1
}$$
If each $S_f$ has an extra section, then composing with $f'$, we obtain a family of multisections in $R$:
$$\xymatrix{
\mathcal C\ar[r]\ar[d] & R \\
\ov{M}_0(\P^1,3)  &
}$$
which degenerates to a comb curve at the boundary.  This implies that $R$ has an extra section, contradiction.
\end{proof}
\begin{defn}
Let $Z$, $W$, and $S$ be smooth varieties.  A family 
$$f:Z \times S\to W$$
of immersions is deemed {\it freely movable} if for any $x_0\in Z_0$ and $v\in T_{x_0}W$, there exists a 1-parameter subfamily $T\to S$ and smooth section $x(t)\in Z_t$ such that $x(0)=x_0$ and $\dot{x}(0) = v$.
\end{defn}
\begin{lemma}\label{bertini}
Let $Z\times S\to W$ be a family of immersions which is freely movable.  Then for any fixed smooth subvariety, $\Pi\subset W$, there exists $s\in S$ such that $Z_s$ meets $\Pi$ transversely.
\end{lemma}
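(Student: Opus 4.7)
The plan is to run the standard Kleiman--Bertini argument: translate ``freely movable'' into a submersion statement for the total evaluation, pull back $\Pi$, and apply generic smoothness to a projection.

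Concretely, consider the total evaluation map $F: Z \times S \to W$ obtained from the family. I claim the free movability hypothesis is equivalent to $F$ being a submersion. Indeed, unraveling the definition, at every point $(x_0,s_0) \in Z\times S$ and every $v\in T_{F(x_0,s_0)}W$, there is a tangent vector $\xi \in T_{(x_0,s_0)}(Z\times S)$ — namely the derivative of the pair $(x(t),t)$ produced by free movability — with $dF(\xi)=v$. So the differential of $F$ is surjective everywhere.

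Given this, since $\Pi \subset W$ is a smooth subvariety, the preimage $F^{-1}(\Pi) \subset Z\times S$ is smooth of codimension $\codim_W(\Pi)$. Consider the projection $p: F^{-1}(\Pi) \to S$. By generic smoothness (Sard's theorem in the analytic category, or generic smoothness of a morphism of smooth varieties in characteristic zero), there is a dense open $S^\circ \subset S$ over which $p$ is smooth. For $s \in S^\circ$, the fiber $p^{-1}(s) = Z_s \cap F^{-1}(\Pi)$ is smooth of codimension $\codim_W(\Pi)$ in $Z_s$. Since $f|_{Z_s}$ is an immersion, this codimension statement is equivalent to the transversality condition
\[
df(T_x Z_s) + T_{f(x,s)}\Pi = T_{f(x,s)} W
\]
at every intersection point $x$, as desired.

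The only genuine subtlety is verifying that the free movability condition really gives surjectivity of $dF$ pointwise and not merely generically; but the hypothesis is stated uniformly for every $(x_0,s_0)$, so this is immediate. Everything else is a routine application of Sard / generic smoothness, so the proof will be short.
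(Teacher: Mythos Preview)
Your argument is correct and is the standard Kleiman--Bertini approach; the paper takes a genuinely different route. The paper works in local holomorphic coordinates near a single non-transverse intersection point of $f_0$ with $\Pi$, chooses a vector $v$ outside the span of $T_0\Pi$ and $df_0(T_0 Z)$, invokes free movability to produce a one-parameter deformation with initial velocity $v$, and then argues by contradiction---via a first-order expansion of $f_{t_i}(y_i)-f_0(0)$---that non-transverse points cannot accumulate along this deformation. So the paper's proof is a hands-on local perturbation argument, while yours packages the hypothesis as ``$F$ is a submersion'' and appeals to generic smoothness of the projection $F^{-1}(\Pi)\to S$.

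Your approach is cleaner and yields the stronger conclusion that transversality holds for a dense open set of $s\in S$, not merely for some $s$. The paper's argument is more elementary in that it avoids invoking generic smoothness as a black box and makes the mechanism of ``moving off'' a bad intersection explicit; on the other hand, as written it only treats a single non-transverse point, leaving an implicit compactness or covering step to globalize. For the application at hand either suffices, but your version is what most readers would expect under the name ``Bertini.''
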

\begin{proof}
We argue using local holomorphic coordinates.  Assume that $W=\C^n$ and $\Pi = \C^\ell\times\{0\}$, with standard coordinates $w_1,w_2,\dots,w_n$.  Assume that $f_0:D^r\to W$ sends $f_0(0)=0\in W$, and is not transverse to $\Pi$ at $0\in D^r$.  In terms of coordinates $u_i$ on $D^r$, we know that the vectors
$$\partial_{w_1}, \dots,\partial_{w_\ell}, df_0(\partial_{u_1}), \dots, df_0(\partial_{u_r})$$
do not span $T_0 W$.  Let $v$ be a vector outside their span, and use the hypothesis of free movability to find a subfamily $f_t:D^r\to W$ and $x(t)\in D^r$ such that $\dot{x}(0)=v$.  Suppose that there exist sequences of points $y_i\to 0\in D^r$ and $t_i\to 0\in T$ such that $f_{t_i}(y_i)$ meets $\Pi$ non-transversely.  Then we have
$$f_{t_i}(y_i)- f_0(0) = \left( f_{t_i}(y_i)- f_{t_i}(x(t_i)) \right) + \left( f_{t_i}(x(t_i)) - f_0(0)   \right).$$
To first order, the LHS lies in the span of $\partial_{w_1}, \dots,\partial_{w_\ell}$, and the second term on the RHS lies in the span of $v$.  The first term on the RHS lies in a small deformation of the span of $df_0(\partial_{u_1}), \dots, df_0(\partial_{u_r})$.  This contradicts our choice of $v$, so a sufficiently small deformation $f_t:D^r\to W$ meets $\Pi$ transversely.  \end{proof}
\noindent
To apply this to our situation, take the family of subvarieties $w_{a,b}(\P^{2*})\subset \W_3$, varying $a_{12}\in H^0(\P^2,\O(12))$ and $b_{18}\in H^0(\P^2,\O(18))$.  This family is freely movable because the restriction of forms from $\P^2$ to any line $L\subset \P^2$ is surjective.\\\\
In order to use the technique of Bryan-Leung to compute the contributions of singular surfaces with non-trivial sections, we need to study the deformation theory of elliptic surfaces.  For this, we use the analytic Tate-Shafarevich group of an elliptic surface, which classifies a type of non-algebraic deformations destroying sections.  See the treatment in \cite{morgfried} for more details.\\\\
If $\pi:E\to B$ be a complex elliptic fibration, let $\E$ denote the sheaf of local holomorphic sections of $\pi$, which is a sheaf of abelian group using the fiberwise group law.  Now,
$$\sha^{an}(E):= H^1(B,\E)$$
can be identified with the set of (isomorphism classes of) genus one fibrations $\pi':E'\to B$ equipped with an isomorphism $\Jac(E')\to E$ over $B$.  Here, the relative Jacobian fibration $\Jac(E')\to B$ is the (unique) elliptic fibration locally isomorphic to $\pi':E'\to B$.\\\\
We may understand the sheaf $\E$ in terms of the (split) short exact sequence
$$0\to \E \to (R^1\pi_* \O_E^*)/\A \to \ul{\Z}\to 0,$$
where $\A\subset R^1\pi_* \O_E^*$ is the subsheaf of divisor classes which do not dominate the base.  As long as $\A$ is supported on an affine set, which will be true in our situation, it has no higher cohomology, so we have
$$0\to H^1(\E)\to H^1(R^1\pi_* \O_E^* )\to H^1(B,\Z)\to 0.$$
From the exponential sequence for $E$, and the Leray spectral sequence for $\pi$, we have an exact sequence
$$ H^2(E,\Z)\to H^2(E,\O_E)\os\delta\to  H^1(B,\E) \to H^3(E,\Z) \to H^1(B,\Z).$$
The image\footnote{In the cases relevant to us, $H^3(E,\Z)=0$, so the connecting morphism $\delta$ is surjective.  } of $\delta$ is the connected component of $0\in H^1(B,\E)$.  There is a universal family of genus 1 fibrations over the vector space $H^2(E,\O_E)$, with origin $\pi:E\to B$ (the only member with a section).  We refer to any 1-parameter subfamily as a $\sha$-deformation of $E$.\\\\
We apply this formalism to the case where $B$ is a small analytic neighborhood of a line $L\subset \P^2$ tangent to the discriminant curve $\Delta$, with non-trivial section $\sigma\subset \pi^{-1}(L)$.  Set
$$U = \{ L':  L'\subset B \}\subset \P^{2*}$$
Even though $\pi^{-1}(L)$ is singular, $E=\pi^{-1}(B)$ is a smooth non-compact threefold.  The natural inclusion
$$\ov{M}_0(E,\ell+nf) \to \ov{M}_0(X,\ell+nf)$$
restricts to an isomorphism on components involving $\sigma$, since $L$ is the unique line in $U$ with a non-trivial section.  Consider a small $\sha$-deformation of the local elliptic threefold $E\to B$.  
\begin{lemma}\label{restriction}
For each line $L'\in U$, the restriction from $H^2(E,\O_E)\to H^2(\pi^{-1}(L'), \O_{\pi^{-1}(L')})$ is surjective.
\end{lemma}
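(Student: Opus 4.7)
The plan is to recast the restriction as a question about the ideal sheaf of $S'=\pi^{-1}(L')$ in $E$ and then reduce it to a coherent cohomology vanishing on the base $B$. Since $\pi\colon E\to B$ is flat and proper with $1$-dimensional fibers, $S'$ is a Cartier divisor in the smooth threefold $E$ with ideal sheaf $\pi^*\O_B(-L')$. The restriction short exact sequence
$$0\to \pi^*\O_B(-L')\to \O_E\to \O_{S'}\to 0$$
shows that the surjectivity of $H^2(E,\O_E)\to H^2(S',\O_{S'})$ is equivalent to the vanishing $H^3(E,\pi^*\O_B(-L'))=0$.

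To compute this group I would use the Leray spectral sequence for $\pi$ together with the projection formula, which gives $R^q\pi_*(\pi^*\O_B(-L'))\simeq R^q\pi_*\O_E\otimes \O_B(-L')$. The fibers of $\pi$ are curves, so $R^q\pi_*\O_E=0$ for $q\geq 2$, and the Weierstrass structure (shared by every $\sha$-deformation, since the Jacobian is unchanged) gives $R^1\pi_*\O_E=\O_B(-3)$. Hence the only contributions to $H^3$ come from $H^3(B,\O_B(-L'))$ and $H^2(B,\O_B(-L'-3))$, and it suffices to prove that $H^q(B,\F)=0$ for $q\geq 2$ and every coherent sheaf $\F$ on $B$.

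For this last step, after shrinking I would take $B$ to be a holomorphic tubular neighborhood of $L\cong \P^1$, i.e., a disk-bundle neighborhood of the zero section in $N_{L/\P^2}=\O_L(1)$, with projection $r\colon B\to L$. Cover $L$ by the two affines $L_0=L\setminus\{\infty\}$ and $L_\infty=L\setminus\{0\}$; the preimages $B_0, B_\infty\subset B$ are each a product of a complex disk with $\C$, hence Stein, and their intersection is a product of a disk with $\C^*$, also Stein. By Cartan's Theorem B, $\{B_0,B_\infty\}$ is a Leray cover for every coherent sheaf, and since the nerve has only two vertices the \v{C}ech complex is concentrated in degrees $\leq 1$. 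This forces $H^{\geq 2}(B,\F)=0$ and completes the proof.

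I expect this coherent-cohomology vanishing on $B$ to be the only step with genuine geometric content; everything else is formal spectral sequence bookkeeping. A minor subtlety is that the projection formula must apply in the analytic category after a $\sha$-deformation, but this is automatic because $\pi$ remains flat and proper with the same Hodge bundle on $B$.
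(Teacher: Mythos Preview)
Your argument is correct and reaches the same endpoint as the paper---vanishing of $H^{\ge 2}$ for coherent sheaves on $B$ via a two-chart Stein cover---but by a different route. The paper instead uses the normal bundle sequence for the zero section $\Sigma\subset E$ (and $Z\subset S'$) to identify $H^2(E,\O_E)\simeq H^1(\Sigma,N_{\Sigma/E})$ and $H^2(S',\O_{S'})\simeq H^1(Z,N_{Z/S'})$, then reduces surjectivity to the vanishing of $H^2(\Sigma,N_{\Sigma/E}\otimes\mathcal I_Z)$ on $\Sigma\simeq B$. Your ideal-sheaf/Leray approach is arguably cleaner: you never invoke the zero section explicitly, and you do not actually need the precise identification $R^1\pi_*\O_E\simeq\O_B(-3)$ (coherence of $R^q\pi_*\O_E$ and vanishing for $q\ge 2$ suffice), so the argument would go through verbatim for any proper flat genus-one fibration over $B$, including the $\sha$-twists you mention. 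One small quibble: vanishing of $H^3(E,\pi^*\O_B(-L'))$ is a priori only \emph{sufficient} for surjectivity, not equivalent; equivalence would also require $H^3(E,\O_E)=0$, which your same Leray argument gives for free but which you do not state.
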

\begin{proof}
Consider the normal bundle sequence for the zero section $\Sigma\subset E$
$$0\to \O_E \to \O_E(\Sigma) \to N_{\Sigma/E} \to 0$$
with associated long exact sequence
$$H^1(E,\O_E(\Sigma)) \to H^1\left(\Sigma,N_{\Sigma/ E} \right) \to H^2(E,\O_E) \to H^2(\Sigma,\O_E(\Sigma)).$$
The cohomology groups on the ends vanish, since the Leray spectral sequence for $\pi$ gives
$$H^1(E,\O(\Sigma)) \simeq H^1(B,\pi_*\O(\Sigma)) = H^1(B,\O)=0$$
$$H^2(E,\O(\Sigma)) \simeq H^2(L',\pi_*\O(\Sigma)) = H^1(B,\O)=0.$$
The same argument works for the fibration $\pi^{-1}(L') \to L'$ with zero section $Z$, so we have a commutative diagram with horizontal isomorphisms:
$$\xymatrix{
H^1(\Sigma, N_{\Sigma/E})\ar[r]^\sim\ar[d] &  H^2(E,\O_E) \ar[d]\\
H^1\left (Z,N_{Z/\pi^{-1}(L')}\right ) \ar[r]^\sim &  H^2\left (\pi^{-1}(L'),\O_{\pi^{-1}(L')}\right ).
}
$$
The left hand restriction map is slightly easier to understand.  Surjectivity is equivalent to the vanishing of $H^2(\Sigma, N_{\Sigma/E}\otimes \mathcal I_Z)$, which holds because $\Sigma$ admits a covering by 2 contractible open sets.
\end{proof}
\noindent
In particular, the restriction map
$$H^2(E,\O_E)\twoheadrightarrow H^2\left(\pi^{-1}(L),\O_{\pi^{-1}(L)}\right)$$ 
is surjective, so we can choose a splitting $V\subset H^2(E,\O_E)$.  Since all the surfaces $\pi^{-1}(L')$ have a section, the moduli map
$$V \times U \to \Def_1(\pi^{-1}(L))$$
is injective.  Recall that $U\to \Def_1(\pi^{-1}(L))$ meets the Mordell-Weil locus transversely at $L$, so a small deformation 
$$\{v\}\times U\to \Def_1(\pi^{-1}(L))$$ 
will also meet the Mordell-Weil locus transversely, at a nearby line $L'\neq L$.  We will arrange that $L'\notin \Delta^*$ using Lemma \ref{restriction}.\\\\
Restricting the elements of $V\subset H^2(E,\O_E)$ to each surface in the family $q:\S|_U\to U$, we obtain a 2-dimensional subspace of sections of the rank 2 vector bundle $R^2q_*(\O_\S)$, which is surjective on each fiber.  Thus, the graphs of these sections are freely movable, so there exists a section whose zero locus misses $\Delta^*$.  In other words, there exists a $\sha$-deformation $\pi':E'\to B$ such that
$$\{ L'\in U:  \MW(\pi'^{-1}(L'))\neq 0 \} \cap \Delta^*=\emptyset.$$
By deformation invariance, the components of $\ov{M}_0(X,\ell+nf)$ which involve $\iota(\sigma)$ can be computed on the deformed local threefold, where the section lies on a smooth elliptic surfaces, and we can invoke the formula of Bryan-Leung \cite{bryanleung}:
\begin{theorem}
Let $S$ be a smooth elliptic surface with topological Euler characteristic $12k$ and section $z$.  Then the local Calabi-Yau threefold $Tot(K_S)\to S$ has genus 0 Gromov-Witten invariants given by the generating series:
$$ \sum_n N^{Tot(K_S)}_{0,z+nf}q^n = \prod_{m=1}^\infty (1-q^m)^{-12k} =: \eta(q)^{-12k} . $$\\\\
\end{theorem}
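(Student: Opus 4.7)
The plan is to reduce the computation to a local analysis near the singular fibers of $S$ and then invoke a standard partition identity. First, by deformation invariance of the (reduced, when needed) Gromov-Witten invariants, I would replace $S$ with a generic elliptic surface of topological Euler characteristic $12k$ in the same deformation class. For such a generic $S$, every singular fiber is an irreducible nodal cubic, there are exactly $12k$ type $I_1$ fibers, and the Mordell-Weil group is trivial, so $z$ is the unique section.

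Next, I would observe that every compact curve in $Tot(K_S)$ lies in the zero section $S$: the projection $Tot(K_S)\to S$ is an affine line bundle over a proper base, so the restriction of any fiber coordinate to the (proper) image of a stable map is constant, hence zero. Therefore any $u:C\to Tot(K_S)$ of class $z+nf$ factors through $S$, with image equal to the section $z$ glued with vertical components. Since smooth fibers are elliptic of genus one and $C$ has geometric genus zero, every vertical component is a multi-cover of the normalization of some nodal fiber $F_i$. Combinatorially, a stable map of class $z+nf$ is specified, up to isomorphism, by choosing for each of the $12k$ nodal fibers a partition $\lambda^{(i)}$ of some $n_i\geq 0$ with $\sum_i n_i = n$, recording the wrapping degrees of the vertical components attached at $F_i$.

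The remaining step is to compute the virtual contribution of the multi-covers attached to a single nodal fiber. The moduli of stable maps to a neighborhood of one nodal fiber stratifies by partition type, and an obstruction/localization analysis (as in Bryan-Leung \cite{bryanleung}) shows that the total contribution assembled from a single $I_1$ fiber is the partition generating function $\prod_{m\geq 1}(1-q^m)^{-1}$. Since the $12k$ nodal fibers are pairwise disjoint and their contributions are independent, the product rule for Gromov-Witten invariants yields
$$\sum_n N^{Tot(K_S)}_{0,z+nf}\,q^n \;=\; \prod_{m\geq 1}(1-q^m)^{-12k} \;=\; \eta(q)^{-12k}.$$

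The main obstacle is the local multi-cover computation: one has to set up the obstruction theory for multi-covers of a singular rational curve inside a Calabi-Yau threefold and verify that the virtual contributions, combined with the automorphism factors, assemble into the partition generating function. Bryan-Leung accomplish this through a symplectic degeneration of the elliptic surface that isolates each nodal fiber; an algebraic analogue via virtual localization on a resolution of the $I_1$ fiber is also viable. Once the local factor per nodal fiber is identified as $\prod_m(1-q^m)^{-1}$, the remainder of the argument is the combinatorial bookkeeping described above.
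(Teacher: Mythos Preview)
The paper does not prove this theorem; it is quoted from Bryan--Leung \cite{bryanleung} and used as input for the later computations. Your outline follows the Bryan--Leung strategy, but there is a genuine gap in the second paragraph.

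Your claim that every compact curve in $Tot(K_S)$ lies in the zero section is false for $k\geq 2$, and your justification is confused. The ``fiber coordinate'' on $Tot(K_S)$ is not a global function; it is the tautological section of $p^*K_S$. Its restriction along a stable map $u:C\to Tot(K_S)$ is a section of $(p\circ u)^*K_S$, which need not be constant (or even make sense as a constant) when the bundle is non-trivial. For an $E(k)$ surface $K_S=(k-2)f$, so on the section component the pullback is $\O_{\P^1}(k-2)$, which has a $(k-1)$-dimensional space of sections. Thus for $k\geq 2$ there is a positive-dimensional family of lifts of the section curve, and $\ov{M}_0(Tot(K_S),z+nf)$ is non-compact. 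For $k=2$ this is precisely why the K3 invariants require the reduced class you allude to; for $k\geq 3$ (the case $k=3$ is the one used in this paper) the problem is worse, not absent.

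The correct way to make sense of the left-hand side is to define the local invariants via $\C^*$-equivariant localization on the fibers of $K_S$, equivalently as an integral over $[\ov{M}_0(S,z+nf)]^{vir}$ against the Euler class of the obstruction bundle built from $K_S$. This is what forces the computation onto the zero section, not a properness argument. Once that is in place, the remainder of your sketch---deforming to a surface with $12k$ nodal fibers and trivial Mordell--Weil, identifying the image as $z$ together with multi-covers of the $I_1$ fibers, and assembling the per-fiber partition generating function $\prod_m(1-q^m)^{-1}$---is indeed the Bryan--Leung argument, with the local multi-cover computation being the substantive step you rightly flag as the crux.
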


\section{Hodge Bundle}
\noindent
The family of surfaces $q:\S\to \P^{2*}$ has a corresponding Hodge bundle $\mathcal H=q_* (\omega_{\S/\P^{2*}})$ whose fiber at $L\in \P^{2*}$ is the space of holomorphic 2-forms on $S=\pi^{-1}(L)$.  This makes sense even when $S$ is ADE-singular; in fact, for any resolution $\eps:\widetilde{S}\to S$,
$$K_{\widetilde{S}} = \eps^*K_S.$$
This implies that the Hodge bundle for the simultaneously resolved family over $\PP$ is the pull back of $\H$.  The Chern classes of $\mathcal H$ are important to us for two reasons.
\begin{lemma}  The constant term of the Kudla-Millson modular form, given in terms of the Euler form $\eps_4$ on the symmetric space, is given by
$$\int_{\PP} j^* \nu_* \mu^*\eps_4=  \int_{\P^{2*}}c_2(\H).$$
\end{lemma}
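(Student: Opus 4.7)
The plan is to identify $\mu^*\eps_4$ on $\ov D$ with the second Chern class of the universal Hodge bundle, and then descend through the diagram
$$\PP \os{j}\to \D \os{\nu}\leftarrow \ov D \os{\mu}\to M$$
down to the family $q:\S\to\P^{2*}$. First, I would appeal to the Kudla--Millson construction: up to the Chern--Weil normalization used in \cite{km}, the form $\eps_4\in H^4(M)$ is the Euler class of the tautological oriented real rank-$4$ bundle $\tau\to M$ whose fiber over a positive definite plane $Z\subset\R^{32}$ is $Z$ itself. A point of $\ov D$ above $Z$ carries the extra datum of a complex structure on $Z$ coming from the Hodge decomposition $Z_\C=H^{2,0}\oplus H^{0,2}$, so these complex structures assemble into a universal rank-$2$ complex Hodge bundle $\H_{\ov D}$ on $\ov D$, and $\mu^*\tau$ is the underlying oriented real rank-$4$ bundle of $\H_{\ov D}$. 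The standard identity $e(E_\R) = c_r(E)$ for a complex rank-$r$ bundle then yields
$$\mu^*\eps_4 = c_2(\H_{\ov D})\in H^4(\ov D).$$

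Next I would descend. The bundle $\H_{\ov D}$ is $O(\Lambda)$-equivariant, hence descends to a rank-$2$ bundle $\H_\D$ on the stack $\D$ with $\nu^*\H_\D=\H_{\ov D}$; the stack-theoretic pushforward (normalized so that $\nu_*[\ov D]=[\D]$, matching the convention already used to absorb the factor of $|G|$ in the discussion following Theorem~\ref{km}) then gives $\nu_*\mu^*\eps_4=c_2(\H_\D)$. Along the period map $j:\PP\to\D$, the construction in Section 6 identifies $j^*\H_\D$ with the Hodge bundle of the simultaneously resolved family of elliptic surfaces over $\PP$, which by the identity $K_{\widetilde S}=\eps^*K_S$ recorded at the start of this section coincides with the pullback from $\P^{2*}$ of $\H=q_*\omega_{\S/\P^{2*}}$. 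Thus $j^*\nu_*\mu^*\eps_4 = c_2(\H_\PP)$ is the pullback of $c_2(\H)$ along the coarse space map $\PP\to\P^{2*}$. This map is birational --- an isomorphism away from $\Delta^*$, with each orbifold chart a finite cyclic quotient whose coarse space is the corresponding analytic chart on $\P^{2*}$ itself --- so integration of a pulled-back class is preserved, giving the stated identity.

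The main obstacle is the first step: matching the explicit Chern--Weil representative of $\eps_4$ used by Kudla--Millson with the Euler form of $\tau$, since \cite{km} works with specific invariant differential forms on the symmetric space and one must translate carefully to avoid a spurious normalization constant. A secondary bookkeeping point is confirming that the stacky pushforward $\nu_*$ used here is normalized compatibly with the intersection pairing of Theorem~\ref{km2}; this is implicit in the earlier treatment of the Noether--Lefschetz coefficients, but deserves an explicit reconciliation when comparing with the Chern-class side.
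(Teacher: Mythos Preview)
Your proposal is correct and follows essentially the same approach as the paper: identify $\eps_4$ as the Euler class of the tautological rank-$4$ real bundle on $M$, note that its pullback to $\ov D$ underlies the rank-$2$ Hodge bundle so that $\mu^*e(V)=c_2(V^{2,0})$, and then descend by push-pull. Your version spells out the descent through $\D$ and the passage from $\PP$ to $\P^{2*}$ more carefully than the paper does, and your caveats about normalization are reasonable but not an obstruction to the argument.
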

\begin{proof}
By Chern-Weil theory, the form $\eps_4$ represents the Euler class of the real vector bundle $V\to M$ defined as follows.  Viewed as the Grassmannian of positive definite 4-planes inside $\R^{4+28}$,  the symmetric space $\widetilde{M}$ has a tautological subbundle $\widetilde{V}\subset M\times \R^{4+28}$.  The action of $\Gamma$ on $\widetilde{M}$ lifts naturally to $\widetilde{V}$, so taking after taking quotients, we are left with the tautological bundle $V\to M$.  After pulling back to the period domain, $\ov{D}$, the complexification splits:
$$\mu^* V\otimes \C \simeq V^{0,2} \oplus V^{2,0}$$
with each summand isomorphic to $\mu^* V$ as a real vector bundle.  Hence,
$$c_2(V^{2,0}) = \mu^*e(V)$$
Since $V^{2,0}$ is the Hodge bundle on the period domain, the result follows from push-pull.
\end{proof}
\begin{lemma}  The normal bundle to the Noether-Lefschetz locus $\NL_2\subset \D$ is isomorphic to the dual Hodge bundle, when pulled back to $\PP$.  \end{lemma}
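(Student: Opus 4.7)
The plan is to identify the normal bundle fiberwise at a generic point of $\NL_2$, realize the identification canonically through the defining vector $\beta$, and then verify that the resulting isomorphism descends through the stack quotient and pulls back to the claimed isomorphism on $\PP$.

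First I would compute the tangent space of $\widetilde{D}$ at a Hodge structure. Viewing $\widetilde{D}$ as the space of isotropic, positive 2-planes $F^2 = H^{2,0} \subset \Lambda_\C$, a standard Griffiths-period calculation gives
$$T_{H^{2,0}} \widetilde{D} \;\simeq\; \Hom\bigl(H^{2,0},\, (H^{2,0})^\perp / H^{2,0}\bigr) \;=\; \Hom(H^{2,0}, H^{1,1}).$$
A component $\widetilde{\NL}_\beta$ of $\widetilde{\NL}_2$ is cut out by the linear condition $(H^{2,0},\beta)=0$, equivalently $H^{2,0} \subset \beta^\perp$. Differentiating, the tangent space to $\widetilde{\NL}_\beta$ is $\Hom(H^{2,0},\, \beta^\perp \cap H^{1,1})$, so the normal bundle fiber is
$$N_{\widetilde{\NL}_\beta/\widetilde{D}}\big|_{H^{2,0}} \;\simeq\; \Hom\bigl(H^{2,0},\, H^{1,1}/(\beta^\perp \cap H^{1,1})\bigr).$$
Since $(\beta,\beta) = -2 \neq 0$, the quotient $H^{1,1}/(\beta^\perp \cap H^{1,1})$ is one-dimensional and is canonically trivialized by the class of $\beta$ itself. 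Therefore the fiber is canonically isomorphic to $(H^{2,0})^\vee$.

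Next I would assemble this into a global isomorphism. The Hodge bundle $\H$ on $\PP$ has fiber $H^0(S, K_S) = H^{2,0}(S)$, which is exactly $F^2$ at $j(L) \in \D$; hence $\H^\vee$ has fiber $(H^{2,0})^\vee$. The pointwise isomorphism $N_{\NL_\beta/\D} \simeq (H^{2,0})^\vee$ varies holomorphically because the trivializing section $\beta$ is flat for the Gauss-Manin connection along $\NL_\beta$. Pulling back via $j : \PP \to \D$, one obtains $j^* N_{\NL_2/\D} \simeq \H^\vee$.

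The main subtlety, and where I expect the real work to lie, is descending through the quotient stack and handling the several components of $\NL_2$ simultaneously. Different components $\widetilde{\NL}_\beta$ can meet transversely and are permuted by $O(\Lambda)/\Gamma$, and the canonical trivialization depends on the sign of $\beta$. Two observations resolve this: (i) by the remark following the lemma on $\Gamma(3)$, the torsion-free group $\Gamma$ never identifies $\beta$ with $-\beta$, so the sign ambiguity is absent after passing to the cover $\ov{D} = \Gamma \bs \widetilde{D}$; and (ii) the normalization $\ov{\NL}_2^{\,\mathrm{norm}}$ separates the local components, so the rank-2 normal bundle of each branch is well-defined and carries the trivialization above. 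The stack $\PP$ was built precisely so that local monodromy near $\Delta^*$ is absorbed into the inertia, so the pullback of $N_{\NL_2/\D}$ is an honest rank-2 vector bundle on $\PP$, and the fiberwise identification of the previous paragraph globalizes to the required isomorphism.
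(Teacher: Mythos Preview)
Your argument is correct and follows the same line as the paper's, only far more explicitly: the paper's proof is the two-sentence sketch ``This follows from the description of the normal bundle in Section~4, along with the fact that local systems have trivial Chern classes,'' which is exactly your identification $N \simeq \Hom(H^{2,0},\C\beta)$ together with the flatness of $\C\beta$. One minor difference in emphasis: the paper's phrasing (``trivial Chern classes'') indicates that only equality of Chern classes is being claimed, which is all that is needed for the excess-intersection computation in Section~9, whereas you push for an honest bundle isomorphism and therefore must grapple with the $\beta \leftrightarrow -\beta$ ambiguity; your handling of this via $\Gamma(3)$ and the normalization is reasonable, but strictly speaking unnecessary for the application.
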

\begin{proof}
This follows from the description of the normal bundle in Section 4, along with the fact that local systems have trivial Chern classes.
\end{proof}
\noindent
To compute the Chern classes of $\H$, we factor $q:\S\to \P^{2*}$ as
$$\S\os{f}\to \LL \os{g}\to \P^{2*}$$
where $\LL\subset \P^2\times \P^{2*}$ is the universal line, cut out by a form of bidegree (1,1).  First, we compute
\begin{align*}
f_*(\omega_{\S/\P^{2*}}) &= f_*(\omega_{\S/\LL} \otimes f^*\omega_{\LL/\P^{2*}} )\\
 &= f_*(\omega_{\S/\LL}) \otimes \omega_{\LL/\P^{2*}} \\
 &= f_*(\omega_{\S/\LL})\otimes (\omega_{\LL}\otimes g^*\omega_{\P^{2*}}^\vee) \\
 &= \O_\LL(3,0)\otimes (\O_\LL(-2,-2) \otimes \O_\LL(0,3)) \\
 &= \O_\LL(1,1)
\end{align*}
Next, we use $q_* = g_*f_*$ to obtain $q_* (\omega_{\S/\P^{2*}}) = g_*\O_\LL(1,1)$.  We have the short sequence
$$0\to \O_{\P^2\times \P^{2*}}\to \O_{\P^2\times \P^{2*}}(1,1)\to \O_\LL(1,1)\to 0$$
so in K-theory $[\O_\LL(1,1)] = [\O_{\P^2\times \P^{2*}}(1,1)] - [\O_{\P^2\times \P^{2*}}]$.  Since these sheaves have no higher cohomology, the Chern classes depend only on the pushfoward of $\O_{\P^2\times \P^{2*}}(1,1)$ to the second factor:
\begin{align*}
ch(g_*[\O_\LL(1,1)])  &= ch\left(\O_{\P^{2*}}(1)^{\oplus 3}\right) - 1.
\end{align*}
Therefore, the Hodge bundle has Chern classes $c_1(\H)=3h$ and $c_2(\H)=3h^2$.\\\\\\

\section{Enumerative Features}
\noindent
In this section, we discuss how to obtain the section counts $h(n)$ from the modularity statement.  Since there are very few modular forms of weight 16:
$$\dim M_{16}(\Gamma(1))=2,$$
it is possible to pin down the $q$-series $\varphi(q)$ of the modular form from Theorem ~\ref{km2}, with $\alpha = j_*[\PP]$.  First, recall the Noether-Lefschetz setup, which is concerned with the orthogonal projection of $\sigma$ to the primitive sublattice $\Lambda_0$:
\begin{lemma}\label{shift33}
The orthogonal projection of a non-trivial section class $\sigma$ to the primitive lattice $\NS_0(S)$ has self-intersection of $-2(\sigma\cdot z+3)$.
\end{lemma}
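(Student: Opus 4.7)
The plan is a direct linear algebra computation in the sublattice $\langle f,z,\sigma\rangle$, using the intersection form already recorded in Section~2. Write $k := \sigma\cdot z$, and recall the known intersection numbers $f^2=0$, $f\cdot z=1$, $z^2=-3$, $\sigma\cdot f=1$ (since $\sigma$ is a section of $\pi$), and $\sigma^2=-3$ (by adjunction, as noted after Remark~\ref{canfib}).

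To produce the orthogonal projection $\sigma'\in\NS_0(S)$, I would solve for scalars $a,b\in\Q$ such that $\sigma' := \sigma - af - bz$ is orthogonal to both $f$ and $z$. Pairing with $f$ gives $1 - b = 0$, so $b=1$; pairing with $z$ gives $k - a + 3 = 0$, so $a = k+3$. Hence
\[
\sigma' = \sigma - (k+3)\,f - z.
\]
Note that both coefficients are integers, consistent with the splitting of the Shioda--Tate sequence recorded in Section~2.

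Now I would expand $(\sigma')^2$ and collect terms:
\[
(\sigma')^2 = \sigma^2 - 2(k+3)\,\sigma\!\cdot\! f - 2\,\sigma\!\cdot\! z + 2(k+3)\,f\!\cdot\! z + z^2,
\]
where the $f^2$ term vanishes. Substituting the intersection numbers gives
\[
(\sigma')^2 = -3 - 2(k+3) - 2k + 2(k+3) - 3 = -2k - 6 = -2(\sigma\!\cdot\! z + 3),
\]
as claimed. There is no real obstacle here; the statement is essentially the one-line consequence of applying the Gram--Schmidt formula to the explicit rank-three sublattice displayed earlier, and it is consistent with Lemma~\ref{gplaw} in the special case $k=1$ (where $[\sigma^{*2}] = 2\sigma - z + 8f$ has the matching self-intersection $-8$).
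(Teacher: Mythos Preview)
Your proof is correct and is precisely the ``direct calculation using the polarization sublattice $\langle f,z\rangle$'' that the paper gestures at: you run Gram--Schmidt against $f$ and $z$ and expand the square. One small slip in your closing parenthetical: $[\sigma^{*2}]=2\sigma-z+8f$ is a section class and hence has self-intersection $-3$, not $-8$ (the number $-8$ is the self-intersection of the \emph{projection} $\sigma'$ when $\sigma\cdot z=1$), so that consistency check as written does not say what you intend; but this does not affect the argument itself.
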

\begin{proof}
This is a direct calculation using the polarization sublattice $\langle f,z \rangle$ introduced in Section 2.
\end{proof}
\noindent
This matches up well with Lemma \ref{shift3}, and it implies that non-trivial section counts only begin to contribute to $\varphi(q)$ at order $q^3$.  The surfaces with $A_n$ singularities contribute to $\varphi(q)$ at orders dictated by the corresponding $A_n$ lattice norm values.  At first pass, one can argue set theoretically.
\begin{itemize}
\item All the fibers of $\S\to\P^{2*}$ with an $A_1$ singularity, which occur over $\Delta^*$, have Hodge structures with a $(-2)$-curve inside the algebraic part.  By taking multiples of this curve class, we find $j(\Delta^*)\subset\NL_2, \NL_8,\NL_{18},\dots$, so these intersections contribute to $q^1,q^4,q^9,\dots$.  
\item All the fibers of $\S\to\P^{2*}$ with an $A_2$ singularity, which occur over cusps of $\Delta^*$, have Hodge structures with a copy of the $A_2$ lattice inside the algebraic part.  By taking different lattice vectors, we find $j(\Delta^*_c)\subset \NL_2, \NL_6,\NL_8$, so these intersections contribute to $q^1, q^3, q^4,\dots$.
\item All the fibers of $\S\to\P^{2*}$ with two $A_1$ singularities, which occur over nodes of $\Delta^*$, have Hodge structures with a copy of the $A_1\oplus A_1$ lattice inside the algebraic part.  By taking different lattice vectors, we find $j(\Delta^*_n)\subset \NL_2, \NL_4,\NL_8$, so these intersections contribute to $q^1, q^2, q^4,\dots$.
\end{itemize}
See Appendix A for more details on the singularity types and Pl\"{u}cker formulas.\\\\
The $q^2$ term of $\varphi(q)$ is the intersection $j(\PP)\cap \NL_4$, which occurs only along $\Delta^*_n$, corresponding to those lines which are bitangent to the discriminant $\Delta\subset \P^2$.  To compute the stacky intersection product, we work locally in a neighborhood of the node, where $\PP$ is isomorphic to $[\C^2/\Z_2\times \Z_2]$.  After the (\'{e}tale) base change to $\C^2$, the local system $\L$ trivializes, since the family can be simultaneously resolved.  It meets each branch of the Noether-Lefschetz locus transversely.  There are two branches of $\ov{\NL}_4(\L)$ here, corresponding to $e+e'$ and $e-e'$, but each branch has multiplicity two because their negatives are not conjugate under the torsion-free subgroup $\Gamma(3)$.  Counting each isolated intersection in this way, we obtain 
$$[\varphi]_2 = \frac{4}{4} \cdot 184032.$$
As discussed above, the constant term of $\varphi(q)$ is the Euler class of the Hodge bundle:
$$[\varphi]_0 = 3.$$
These two coefficients are enough to pin down the modular form:
$$\varphi(q) = \frac{31 E_4(q)^4 + 113 E(q)^4E_6(q)^2}{48} =  3 - 1188 q + 184032 q^2 +\dots .$$
Now, we can write down an expression for the section curve counts $h(n)$, in terms of $\varphi(q)$ and correction terms coming from the singular surfaces.
\begin{theorem}
Ignoring constant terms, we have the following equality of $q$-series:
$$\varphi(q)=-\frac{1188}{2} \Theta_1(q) + \frac{184032}{4} (\Theta_1(q) - 1)^2 + \frac{1944}{6} (\Theta_2(q) - 3\Theta_1(q)) + \sum_{n\geq 3} h(n) q^n,$$
where $\Theta_i(q)$ denotes the theta series for the root lattice $A_i$.  
\end{theorem}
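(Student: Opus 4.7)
\noindent The plan is to decompose $\varphi(q)$ as a sum of contributions indexed by the disjoint geometric loci in $\PP$ where $j(\PP)$ meets the Noether-Lefschetz divisors $\NL_{2r}\subset\D$, and identify each contribution with a multiple of an $A_i$-theta series. The stratification of Section 2 together with the discussion preceding the theorem enumerates four mutually exclusive sources: smooth points of $\Delta^*$ (giving an $A_1$ singular fiber), nodes of $\Delta^*$ (giving $A_1\oplus A_1$), cusps of $\Delta^*$ (giving $A_2$), and isolated points of $\P^{2*}\setminus\Delta^*$ where $\MW(S)$ is non-trivial. By Lemmas \ref{shift3} and \ref{shift33}, a non-trivial section with $\iota_*\sigma=\ell+nf$ projects to a vector of norm $-2n$ in $\NS_0(S)$, so the Mordell-Weil source produces $\sum_{n\geq 1}h(n)q^n$; by Corollary \ref{empty} this reduces to $\sum_{n\geq 3}h(n)q^n$.

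\medskip

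\noindent For each singular source I would combine the local stacky presentations from the proof of Proposition \ref{pdmap} with the Gysin formalism. At a generic point of $\Delta^*$, $\PP\simeq [\C^2/\Z_2]$; on the double cover $\L$ trivializes and for every $A_1$-lattice vector $v=ke$ the branch of $\ov{\NL}_{2k^2}(\L)$ attached to $v$ meets $s(\PP)$ along the entire preimage of $\Delta^*$. The resulting excess intersection equals the integral of the top Chern class of the excess normal bundle over the normalization of $\Delta^*$; since the normal bundle to every branch of $\NL_{2r}$ is the pullback of the dual Hodge bundle, this integral is \emph{independent of $k$}. Summing over $A_1$-lattice vectors, with the $\frac{1}{2}$ stacky weight and the multiplicity-two factor from the Remark following Lemma \ref{romagny}, the tangent-line contribution takes the form $C\cdot\Theta_1(q)$ for a single universal constant $C$.

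\medskip

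\noindent For the isolated node and cusp contributions, the same Gysin calculation gives $\frac{1}{4}\Theta_1(q)^2$ per node and $\frac{1}{6}\Theta_2(q)$ per cusp. To avoid double counting with the tangent-line sum, I subtract the vectors already absorbed into the tangent integral: at each node the two coordinate $A_1$-axes of $A_1\oplus A_1$ have already been counted by the two branches of $\Delta^*$ through the node, replacing $\Theta_1(q)^2$ by $(\Theta_1(q)-1)^2$ modulo constants; at each cusp the three $A_1$-sublattices of $A_2$ have been counted by the three $A_1$-loci that the single cuspidal branch of $\Delta^*$ unfolds into on the $S_3$-cover, replacing $\Theta_2(q)$ by $\Theta_2(q)-3\Theta_1(q)$ modulo constants. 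Classical Plücker formulas applied to $\Delta\subset\P^2$ of degree $36$ with $216$ cusps and no nodes give $\delta^*=184032$ bitangents (nodes of $\Delta^*$) and $\kappa^*=3\cdot 36\cdot 34-8\cdot 216=1944$ flexes (cusps of $\Delta^*$), producing the prefactors $\frac{184032}{4}$ and $\frac{1944}{6}$.

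\medskip

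\noindent The universal constant $C$ is finally pinned down via modularity. Since $\dim M_{16}(SL_2(\Z))=2$, Theorem \ref{km2} forces $\varphi(q)$ to be the unique weight-$16$ form with $[\varphi]_0=3$ and $[\varphi]_2=184032$, both already determined above, so $[\varphi]_1=-1188$. Both $(\Theta_1-1)^2$ and $\Theta_2-3\Theta_1$ vanish at $q^1$ and the Mordell-Weil sum starts at $q^3$, so the tangent-line term is the sole contributor at $q^1$; this forces $2C=-1188$, giving $C=-\frac{1188}{2}$. The main obstacle is justifying the universality of the excess integral and the two subtraction rules: one must verify, via Artin's simultaneous resolution and Vistoli's Gysin formalism, that the normal-cone calculation depends only on the local geometry of $\Delta^*\subset\PP$ (and its multiplicities at nodes and cusps) and not on the particular Noether-Lefschetz branch, and that the local tangent-integral at a nodal or cuspidal point of $\Delta^*$ really equals $\frac{2\Theta_1}{4}$ or $\frac{3\Theta_1}{6}$ as claimed. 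Granted these local computations, assembling the four contributions produces the stated identity.
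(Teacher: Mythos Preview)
Your proposal is correct and follows essentially the same approach as the paper: decompose $[\varphi]_r$ into the excess contribution along $\Delta^*$, the isolated contributions at nodes and cusps of $\Delta^*$ (with the theta-series bookkeeping and stacky weights $\tfrac14$, $\tfrac16$), and the Mordell--Weil term $h(r)$, then pin down the coefficient of $\Theta_1$ from the known value $[\varphi]_1=-1188$. One simplification worth noting: to see that the excess along $\Delta^*$ is independent of $k$, the paper simply observes that the branch of $\ov{\NL}_{2k^2}(\L)$ attached to $ke$ is \emph{literally the same locus} as the one attached to $e$ (both are defined by orthogonality to the line $\R e$), which is cleaner than your excess-normal-bundle argument and sidesteps the ``universality'' obstacle you flag at the end.
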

\begin{proof}
First, consider the $q^1$ term, which is an excess intersection contribution:
$$[\varphi]_1 = -1188 = j(\PP)\cap \NL_2. $$
In a neighborhood of each cusp of $\Delta^*$, the local monodromy of $\PP$ acts transitively on the lattice vectors which specialize to exceptional $(-2)$-curves in the simultaneous resolution ($e_1$, $e_2$, $e_1+e_2$, and their negatives).  This implies that there is only one irreducible component of $\ov{\NL}_2(\L)$ which intersects $s(\PP)$.  By taking integer multiples of these lattice vectors, of self-intersection $-2k^2$, we find an identical component $\ov{\NL}_{2k^2}(\L)^\circ\subset \ov{\NL}_{2k^2}(\L)$, which also intersects $\PP$ along $\Delta^*$, with the same excess contribution (this is the first term).  But there are also components of $\ov{\NL}_{2k^2}(\L)$ with {\it isolated} intersections with $s(\PP)$ only at nodes and cusps.
\begin{itemize}
\item At a given point $[L]\in \Delta^*_n$, these components correspond to lattice vectors $ae+be'$, where $a\neq 0$ and $b\neq 0$, and $-2a^2-2b^2=-2k^2$.  The number of such lattice vectors is given by
$$\left[\left( \Theta_1(q) - 1 \right)^2\right]_{2k^2}$$
\item At a given point $[L]\in \Delta^*_c$, these components correspond to lattice vectors $ae_1+be_2$, where $a\neq 0$, $b\neq 0$, $a\neq b$, and $-2a^2-2b^2+2ab=-2k^2$.  The number of such lattice vectors is given by
$$\left[\Theta_2(q) - 3\Theta_1(q)\right]_{2k^2}$$
\end{itemize}
The factors of $\frac{1}{4}$ and $\frac{1}{6}$ come from the \'{e}tale base change required to compute the stacky intersection as a transverse intersection.\end{proof}
\vspace{10mm}

\section{Appendix A}
\noindent
Here, we study the singularities that occur in our geometries.\\\\
The morphism $\pi:X\to \P^2$ is an elliptic fibration with an irreducible discriminant curve $\Delta\subset \P^2$ of degree 36, with 216 cusps and no other singularities.  To see this, note that the forms $a_{12}$ and $b_{18}$ give curves in $\P^2$ which meet transversely at $216$ points.  From the form of the discriminant equation
$$\Delta = 4a^3+27b^2,$$
each of these points is a cusp of $\Delta$.  From the weighted hypersurface description, $X$ has topological Euler characteristic $-540$, so there are no further singularities in $\Delta$.
\begin{itemize}
\item  Away from $\Delta$, the fibers of $\pi$ are smooth curves of genus 1 (type $I_0$).
\item When $p\in \Delta$ is a general point, $\pi^{-1}(p)$ is an irreducible nodal cubic (type $I_1$).  An analytic neighborhood of the singularity is a versal family for the node, crossed with a disk:
$$Z(y^2+x^2+t)\to \C^2_{s,t}$$
\item When $p\in \Delta$ is a cusp, $\pi^{-1}(p)$ is a cuspidal cubic (type $II$).  An analytic neighborhood of the singularity is a versal family for the cusp:
$$Z(y^2+x^3+sx+t)\to \C^2_{s,t}.$$
This versal family remains nodal over the discriminant curve given by $4s^3+27t^2$.\\
\end{itemize}
The morphism $q:\S\to \P^{2*}$ is the family of elliptic surfaces obtained by restricting $\pi$ to various lines $L\subset\P^2$.
\begin{itemize}
\item When $L$ meets $\Delta$ transversely at a general point, the surface $\pi^{-1}(L)$ is smooth, given locally by $y^2+x^2+t$, where $t$ is the parameter along $L$.
\item When $L$ meets $\Delta$ with order 2 (tangent) at a general point, the surface $\pi^{-1}(L)$ has an $A_1$ singularity, given locally by $y^2+x^2+t^2$.
\item When $L$ meets $\Delta$ with order 3 (at a flex), the surface $\pi^{-1}(L)$ has an $A_2$ singularity given locally by $y^2+x^2+t^3$.
\item When $L$ meets $\Delta$ at a cusp in the non-special direction ($s=0$), the surface $\pi^{-1}(L)$ is smooth, given locally by $y^2+x^3+t$.
\item When $L$ meets $\Delta$ at a cusp in the special direction ($t=0$), the surface has an $A_1$ singularity given locally by $y^2+x^3+sx=y^2+x(x^2+s)$.\\
\end{itemize}
These possibilities can be phrased more concisely by saying that $S=\pi^{-1}(L)$ is singular if and only if $L\in \Delta^*$, the dual curve of the discriminant $\Delta$.  The Pl\"{u}cker formulas allow us to compute $d^*=\deg(\Delta^*)$, $c^* = |\Delta^*_c|$, and $n^* = |\Delta^*_n|$ from the known values of $d=\deg(\Delta)$, $c=|\Delta_c|$ and $n=|\Delta_n|$.
\begin{align*}
d^* &= d(d-1) - 2n- 3c =612\\
c^* &= 3d(d-2) -6n -8c = 1944\\
n^* &= \frac{ d - d^*(d^*-1) + 3c^*}{2}=184032.
\end{align*}
where the third equation comes from dualizing the first.  Rephrasing the previous bullet points in terms of the dual curve picture, we have:
\begin{itemize}
\item If $L\in \Delta^*$ is general, the surface has an $A_1$ singularity. 
\item If $L\in \Delta^*$ is a flex, the surface has an $A_1$ singularity ($L$ meets $\Delta$ at a cusp in the special direction).
\item If $L\in \Delta^*$ is a node, the surface has two disjoint $A_1$ singularities ($L$ is a bitangent of $\Delta$).  
\item If $L\in \Delta^*$ is a cusp, the surface has an $A_2$ singularity ($L$ is tangent to $\Delta$ at a flex).\\
\end{itemize}
These singular surfaces have minimal resolutions $\widetilde{S}$, whose singular fibers fall into Kodaira's classification.  The $A_1$ singularity requires a single blow up, and the $A_2$ singularity requires two.  The possibilities for the Kodaira fibers and the polarized lattices of vertical curves are as follows:
\begin{itemize}
\item If $L\in \Delta^*$ is general, the resolved surface has a type $I_2$ (banana) fiber.\\
$A_0(\widetilde{S})$ is isomorphic to the $A_1$ root lattice.
\item If $L\in \Delta^*$ is a flex, the resolved surface has a type $III$ (tacnode) fiber.\\
$A_0(\widetilde{S})$ is isomorphic to the $A_1$ root lattice.
\item If $L\in \Delta^*$ is a node, the resolved surface has two type $I_2$ (banana) fibers.
$A_0(\widetilde{S})$ is isomorphic to the $A_1\times A_1$ root lattice.
\item If $L\in \Delta^*$ is a cusp, the resolved surface has a type $I_3$ (triangle) fiber.
$A_0(\widetilde{S})$ is isomorphic to the $A_2$ root lattice.\\
\end{itemize}
Torsion in the Mordell-Weil group of $S$ can be bounded in terms of the lattice $A_0(\widetilde{S})$, see \cite{mir}.  In our case, these lattices are sufficiently small that torsion never occurs.

\newpage
\nocite{*}
\bibliography{JBDraft}

\begin{thebibliography}{10}

\bibitem{artinsimul}
M~Artin.
\newblock Algebraic construction of {B}rieskorn's resolutions.
\newblock {\em Journal of Algebra}, 29(2):330 -- 348, 1974.

\bibitem{bloch}
S.~Bloch and A.~Okounkov.
\newblock The character of the infinite wedge representation.
\newblock {\em Adv. Math.}, 149(1):1--60, 2000.

\bibitem{brieskornsimul}
E.~Brieskorn.
\newblock Die {A}uflösung der rationalen {S}ingularitäten holomorpher
  {A}bbildungen.
\newblock {\em Mathematische Annalen}, 178:255--270, 1968.

\bibitem{bryanleung}
J.~Bryan and N.~Leung.
\newblock The enumerative geometry of {K}3 surfaces and modular forms.
\newblock {\em Journal of the American Mathematical Society}, 13(2):371--410,
  2000.

\bibitem{cox}
D.~Cox.
\newblock The {N}oether-{L}efschetz locus of regular elliptic surfaces with
  section and $p_g\geq 2$.
\newblock {\em American Journal of Mathematics}, 112(2):289--329, 1990.

\bibitem{Dijk}
Robbert Dijkgraaf.
\newblock {\em Mirror Symmetry and Elliptic Curves}, pages 149--163.
\newblock Birkh{\"a}user Boston, Boston, MA, 1995.

\bibitem{morgfried}
R.~Friedman and J.W. Morgan.
\newblock {\em Smooth Four-Manifolds and Complex Surfaces}.
\newblock Ergebnisse der Mathematik und ihrer Grenzgebiete. 3. Folge / A Series
  of Modern Surveys in Mathematics. Springer Berlin Heidelberg, 1994.

\bibitem{gil}
H.~Gillet.
\newblock Intersection theory on algebraic stacks and {Q}-varieties.
\newblock {\em Journal of Pure and Applied Algebra}, 34(2-3):193--240, Dec.
  1984.

\bibitem{grt}
P.~Griffiths, C.~Robles, and D.~Toledo.
\newblock Quotients of non-classical flag domains are not algebraic.
\newblock {\em Algebraic Geometry}, 1(1):1--13, 2014.

\bibitem{hkk}
M.~Huang, S.~Katz, and A.~Klemm.
\newblock Topological string on elliptic {CY} 3-folds and the ring of {J}acobi
  forms.
\newblock {\em Journal of High Energy Physics}, 10:1--80, 2015.

\bibitem{katzgoren}
S.~Katz and D.~Morrison.
\newblock Gorenstein threefold singularities with small resolutions via
  invariant theory of {W}eyl groups.
\newblock {\em Journal of Algebraic Geometry}, pages 79--84, 1992.

\bibitem{kkvblack}
Sheldon Katz, Albrecht Klemm, and Cumrun Vafa.
\newblock M-theory, topological strings and spinning black holes.
\newblock {\em Adv. Theor. Math. Phys.}, 3(5):1445--1537, 1999.

\bibitem{kt}
M.~Kool and R.~Thomas.
\newblock Reduced classes and curve counting on surfaces {I}{I}: calculations.
\newblock {\em Algebraic Geometry}, 1(3):384--399, 2014.

\bibitem{km}
S.~Kudla and J.~Millson.
\newblock Intersection numbers of cycles on locally symmetric spaces and
  {F}ourier coefficients of holomorphic modular forms in several complex
  variables.
\newblock {\em Publications Math\'{e}matiques de l'IHES}, 71(2):121--172, 1990.

\bibitem{mp}
D.~Maulik and R.~Pandharipande.
\newblock Gromov-{W}itten theory and {N}oether-{L}efschetz theory.
\newblock In {\em A Celebration of Algebraic Geometry}, volume~18 of {\em Clay
  Math. Proc.}, pages 469--506, Providence, RI, 2013. Amer. Math. Soc.

\bibitem{mirmoduli}
R.~Miranda.
\newblock The moduli of {W}eierstrass fibrations over {$\mathbb P^1$}.
\newblock {\em Mathematische Annalen}, 255(3):379--394, 1981.

\bibitem{mir}
R.~Miranda.
\newblock {\em The basic theory of elliptic surfaces}.
\newblock Dottorato di Ricerca in Matematica, ETS Editrice, Pisa, 1989.

\bibitem{oberdieck}
G.~Oberdieck and J.~Shen.
\newblock Curve counting on elliptically fibered {C}alabi-{Y}au 3-folds.
\newblock math.AG, arXiv:1608.07073.

\bibitem{kkvpt}
R.~Pandharipande and R.~P. Thomas.
\newblock The {K}atz-{K}lemm-{V}afa conjecture for {K}3 surfaces.
\newblock math.AG, arXiv:1404.6698.

\bibitem{romagny}
Matthieu Romagny.
\newblock Group actions on stacks and applications.
\newblock {\em Michigan Math. J.}, 53(1):209--236, 04 2005.

\bibitem{saito}
M.~Saito.
\newblock On the infinitesimal {T}orelli problem of elliptic surfaces.
\newblock {\em Journal of Mathematics of Kyoto University}, 23(3):441--460,
  1983.

\bibitem{schmid}
W.~Schmid.
\newblock Variation of {H}odge structure: the singularities of the period
  mapping.
\newblock {\em Inventiones Mathematicae}, 22:211--319, 1973.

\bibitem{vist}
A.~Vistoli.
\newblock Intersection theory on algebraic stacks and their moduli spaces.
\newblock {\em Inventiones Mathematicae}, 97(3):613--670, 1989.

\bibitem{yy}
S.-T. Yau and S.~Yamaguchi.
\newblock Topological string partition functions as polynomials.
\newblock {\em Journal of High Energy Physics}, 7(47), 2004.

\bibitem{yz}
S.-T. Yau and E.~Zaslow.
\newblock B{PS} states, string duality, and nodal curves on {$K3$}.
\newblock {\em Nuclear Physics B}, 471(3):503--512, 1996.

\end{thebibliography}
\bibliographystyle{plain}

\end{document}